\providecommand{\U}[1]{\protect \rule{.1in}{.1in}}
\newtheorem{theorem}{Theorem}[section]
\newtheorem{corollary}[theorem]{Corollary}
\newtheorem{definition}[theorem]{Definition}
\newtheorem{lemma}[theorem]{Lemma}
\newtheorem{remark}[theorem]{Remark}
\newenvironment{proof}[1][Proof]{\noindent \textbf{#1.} }{\  $\Box$}
\numberwithin{equation}{section}
\begin{document}

\makeatletter
\newcommand{\rmnum}[1]{\romannumeral #1}
\newcommand{\Rmnum}[1]{\expandafter\@slowromancap\romannumeral #1@}
\makeatother
{\theoremstyle{nonumberplain}
}

\title{$ G $-Bessel processes and related properties}
\author{Mingshang Hu \thanks{Zhongtai Securities Institute for Financial Studies,
Shandong University, Jinan, Shandong 250100, PR China. humingshang@sdu.edu.cn.
Research supported by the National Natural Science Foundation of China (No. 12326603, 11671231) and the National Key R\&D Program of China (No. 2018YFA0703900). }
\and Renxing Li\thanks{Zhongtai Securities Institute for Financial Studies,
Shandong University, Jinan, Shandong 250100, PR China. 202011963@mail.sdu.edu.cn.
 }
 }
\maketitle

\textbf{Abstract}. In this paper, we introduce $ G $-Bessel processes for a class of $ d $-dimensional $ G $-Brownian motions. Under the condition of dimensionality $ d $, we obtain that the $ G $-Bessel process is the solution of the stochastic differential equation. Furthermore, under the stricter condition of dimensionality, we establish the existence and uniqueness of a solution of the stochastic differential equation governing the $ G $-Bessel process and prove the nonattainability of the origin for $ G $-Brownian motion.

{\textbf{Key words}. } $G$-Brownian motion; $G$-heat equation; $ G $-Bessel process; nonattainability of the origin

\textbf{AMS subject classifications.} 60H10

\addcontentsline{toc}{section}{\hspace*{1.8em}Abstract}

\section{Introduction}

Motivated by the uncertainty problem in the financial market, Peng introduced $ G $-normal distribution, $ G $-expectation and $ G $-Brownian motion via G-heat equation and established the corresponding stochastic differential equation theory (see \cite{peng2004filtration,peng2005nonlinear,peng2007G,peng2008multi,peng2019nonlinear}). In recent years, there has been a lot of research on the stochastic differential equation driven by $ G $-Brownian motion (see \cite{gao2009pathwise,lin2013some,liny2013stochastic,ren2011} and the references therein). Some properties of the stopping time under $ G $-expectation are established in \cite{song2011hitting,liu2020exit}. Sample path properties of $ G $-Brownian motion are investigated in \cite{wang2018sample}.

If $ G $ is a linear function, then the $ G $-expectation reduces to a linear expectation, and the $ G $-Brownian motion simplifies to the classical Brownian motion. Within the framework of linear expectation, Bessel processes have been extensively studied and applied in financial mathematics, see \cite{G2003survey,brownian2012,countinuous2013,brownian2016,yor2001expon} and the references therein. However, as we know, there are no results on Bessel processes under the $ G $-expectation framework ($ G $-Bessel processes). The main purpose of this paper is to investigate $ G $-Bessel processes. 

In deriving the expression for the $ G $-Bessel process, we encountered two major difficulties. First, the quadratic variation process of the $ G $-Brownian motion is a continuous process with independent and stationary increments, which is non-deterministic unless $ G $ is a linear function. This makes it difficult to handle the integral terms related to the quadratic variation process in our calculations. Second, in the classical case, the probability of Brownian motion staying in a ball can be computed using the density function, and the dominated convergence theorem can be applied to obtain the integral limit of the approximation function. However, these tools are not applicable within the framework of $ G $-expectation. 

To overcome these difficulties, we propose a reasonable condition (A) to the function $ G $ (see Section \ref{s3}). In this case, the quadratic variation process satisfies (H) (see Section \ref{s3}). Then by applying L\'{e}vy's martingale characterization of $ G $-Brownian motion, we obtain the equation (\ref{formally}) that the $ G $-Bessel process formally satisfies. Using PDE techniques, we obtain the capacity estimate for the $ G $-Brownian motion staying in a ball, which is better than the capacity estimate in \cite{hu2016quasi}. In the truncated approximation, by using the capacity estimate, we prove that the $ G $-Bessel process is a solution to equation (\ref{formally}). Furthermore, we establish the existence and uniqueness of solution of the equation (\ref{formally}). Based on this conclusion, we obtain that the nonattainability of the origin for the $ G $-Brownian motion.

This paper is organized as follows. In Section \ref{s2}, we recall some necessary notations and results that we will use in this paper. $ G $-Bessel processes is introduced in Section \ref{s3}. In Section \ref{s4}, we obtain some properties of $ G $-Bessel processes. 

\section{Preliminaries}\label{s2}

In this section, we recall some results in the $ G $-expectation framework. More relevant details can be found in \cite{peng2004filtration,hu2016quasi,hu2021on,peng2005nonlinear,peng2007G,peng2008multi,peng2019nonlinear,denis2011function}.

Let $ \Omega $  be a given nonempty set and let $ \mathcal{H} $ be a linear space of real-valued functions defined on $ \Omega $ such that $ 1\in\mathcal{H} $ and $ \left|X \right|\in\mathcal{H}  $ for any $ X\in\mathcal{H} $. Moreover, we suppose that if $ X_{1},\cdots,X_{n}\in\mathcal{H} $, then $ \varphi(X_{1},\cdots,X_{n})\in\mathcal{H} $ for each $ \varphi\in C_{b,Lip}(\mathbb{R}^{n}) $, where $ C_{b,Lip}(\mathbb{R}^{n}) $ denotes the linear space of all bounded Lipschitz functions on $ \mathbb{R}^{n} $. The space $ \mathcal{H} $ is regarded as the space of random variables. An $ n $-dimensional random vector $ X=(X_{1},\cdots,X_{n}),$ where $ X_{i}\in\mathcal{H} $, is denoted by $ X\in\mathcal{H}^{n} $. We denote the scalar product of $ x,y\in\mathbb{R}^{n} $ by $ \langle x,y\rangle  $ . The Euclidean norm of $ x $ is defined as $ |x|=\langle x,x\rangle^{\frac{1}{2}}  $.

\begin{definition}
	A sublinear expectation is a functional $ \hat{\mathbb{E}}:\mathcal{H}\rightarrow\mathbb{R} $ satisfying the following properties: for each $ X,Y\in\mathcal{H} $, we have the following.
	\begin{enumerate}[(i)]
		\item\label{1} Monotonicity: $ X\geq Y $ implies $\hat{\mathbb{E}}[X]\geq \hat{\mathbb{E}}[Y]  $;
		\item\label{2} Constant preserving: $ \hat{\mathbb{E}}[c]=c $ for $ c\in \mathbb{R} $;
		\item\label{3} Sub-additivity: $ \hat{\mathbb{E}}[X+Y]\leq\hat{\mathbb{E}}[X]+\hat{\mathbb{E}}[Y] $;
		\item\label{4} Positive homogeneity: $ \hat{\mathbb{E}}[\lambda X]=\lambda\hat{\mathbb{E}}[X] $ for $ \lambda>0 $.
	\end{enumerate}
\end{definition}

The triple $ (\Omega,\mathcal{H},\hat{\mathbb{E}}) $ is called a sublinear expectation space. 

\begin{remark}
	If the inequality in $ (3) $ takes equality, then $ \hat{\mathbb{E}} $ reduces to a linear expectation and $ (\Omega,\mathcal{H},\hat{\mathbb{E}}) $ becomes a linear expectation space.
\end{remark}

\begin{definition}
	Two $ n $-dimensional random vectors $ X_{1} $ and $ X_{2} $, defined respectively on sublinear expectation spaces $(\Omega_{1},\mathcal{H}_{1},\hat{\mathbb{E}}_{1})  $ and $ (\Omega_{2},\mathcal{H}_{2},\hat{\mathbb{E}}_{2}) $, are said to be identically distributed, denoted by $ X_{1}\overset{d}{=}X_{2} $, if 
	\[\hat{\mathbb{E}}_{1}[\varphi(X_{1})]=\hat{\mathbb{E}}_{2}[\varphi(X_{2})], \quad \forall \varphi\in C_{b,Lip}(\mathbb{R}^{n}).  \]
\end{definition}

\begin{definition}
	On the sublinear expectation space $ (\Omega $,$\mathcal{H},\hat{\mathbb{E}})$, an $ n $-dimensional random vector $ Y $ is said to be independent from an $ m $-dimensional random vector $ X $ if 
	\[\hat{\mathbb{E}}[\varphi(X,Y)]=\hat{\mathbb{E}}[\hat{\mathbb{E}}[\varphi(x,Y)]_{x=X}], \quad \forall \varphi\in C_{b,Lip}(\mathbb{R}^{m+n}). \]
\end{definition}	

Let $ G:\mathbb{S}_{d}\rightarrow\mathbb{R} $ be a given monotonic, sublinear mapping. That is, for $ A,B\in\mathbb{S}_{d} $, 
\begin{align*}
	\begin{cases}
		G(A)\leq G(B)  \text{ if } A\leq B,\\
		G(A+B)\leq G(A)+G(B),\\
		G(\lambda A)=\lambda G(A) \text{ for }\lambda\geq0,
	\end{cases}
\end{align*}
where $ \mathbb{S}_{d} $ is the collection of $ d\times d $ symmetric matrices. According to \cite{peng2019nonlinear}, there exists a bounded, convex and closed subset $ \Gamma\subset\mathbb{S}_{d}^{+} $ such that
\begin{align}\label{G}
	 G(A)=\frac{1}{2}\sup_{\gamma\in \Gamma}\mathrm{tr}(\gamma A),
\end{align} 
where $ \mathbb{S}_{d}^{+} $ is the collection of non-negative elements in $ \mathbb{S}_{d} $. In this paper, we always assume that there exist a constant $ 0<\underline{\sigma}<\infty  $ such that
\[\frac{1}{2}\underline{\sigma}^{2}\mathrm{tr}[A-B]\leq G(A)-G(B), \quad \text{for } A\geq B. \]

\begin{definition}[$ G $-normal distribution]
	On the sublinear expectation space $ (\Omega $,$\mathcal{H},\hat{\mathbb{E}})$, a $ d $-dimensional random vector $ \xi $ is called $ G $-normally distributed if for each $ \varphi\in C_{b,Lip}(\mathbb{R}^{d}) $, 
	\[u(t,x):=\hat{\mathbb{E}}[\varphi(x+\sqrt{t}\xi)],\]
	is the viscosity solution to the following $ G $-heat equation:
	\begin{align*}
		\begin{cases}
			\partial_{t}u(t,x)-G(D^{2}_{x}u(t,x))=0,\\
			u(0,x)=\varphi(x),
		\end{cases}
	\end{align*}
	where $ D^{2}_{x}:=(\partial^{2}_{x_{i}x_{j}})_{i,j=1}^{d} $.  	
\end{definition}

\begin{definition}[$ G $-Brownian motion starting at $ x $]
	Let $ (\Omega $,$\mathcal{H},\hat{\mathbb{E}})$ be a sublinear expectation space and let $ x\in\mathbb{R}^{d} $. A $ d $-dimensional process $ B^{x}=\{B_{t}^{x},\ t\geq0 \} $ is called a $ G $-Brownian motion starting at $ x $ if the following properties are satisfied:
	\begin{enumerate}[(i)]
		\item $ B_{0}^{x}=x $;
		\item For each $ t,s\geq 0 $, $ B_{t+s}^{x}-B_{t}^{x} $ is independent from $ (B_{t_{1}}^{x},B_{t_{2}}^{x},\cdots,B_{t_{n}}^{x}) $ for $ n\in\mathbb{N} $ and $ 0\leq t_{1}\leq\cdots\leq t_{n}\leq t $;
		\item $ B_{t+s}^{x}-B_{t}^{x}\overset{d}{=}\sqrt{s}\xi $ for $ t,s\geq0 $, where $ \xi $ is $ G $-normally distributed.
	\end{enumerate}
    If $ x=0 $, we denote $ B^{x} $ simply as $ B $ and refer to $ B $ as a $ d$-dimensional $ G $-Brownian motion.
\end{definition}

\begin{remark}
	If $ x=0 $ and $ G(A)=\frac{1}{2}\mathrm{tr}[A] $ for all $ A\in\mathbb{S}_{d} $, then $ (B_{t})_{t\geq0} $ is a classical Brownian motion.  
\end{remark}

Let $ B $ be a $ d $-dimensional $ G $-Brownian motion on $ (\Omega $,$\mathcal{H},\hat{\mathbb{E}})$. For each $ t\geq0 $, we define the following spaces:

\begin{itemize}
	\item $ Lip(\Omega_{t}):=\{\varphi(B_{t_{1}},\cdots,B_{t_{n}}):n\in\mathbb{N}, 0\leq t_{1}\leq\cdots\leq t_{n}\leq t,\varphi\in C_{b,Lip}(\mathbb{R}^{d\times n})\} $;
	\item $ Lip(\Omega):=\cup_{n=1}^{\infty}Lip(\Omega_{n}) $;
	\item $ \mathcal{F}_{t}:=\sigma(B_{s}:s\leq t) $;
	\item $ \mathcal{F}:=\bigvee_{t\geq0}\mathcal{F}_{t} $.
\end{itemize}

Define $ \|X\|_{L_{G}^{p}}=(\hat{\mathbb{E}}[|X|^{p}])^{\frac{1}{p}}$ for $ X\in Lip(\Omega) $ and $ p\geq1 $. We denote by $ L_{G}^{p}(\Omega) $ the completion of $ Lip(\Omega) $ under the norm $ \|\cdot\|_{L_{G}^{p}} $. The space $  L_{G}^{p}(\Omega) $ is a Banach space and $ \hat{\mathbb{E}}:Lip(\Omega)\rightarrow\mathbb{R} $ can be continuously extended to the mapping from $ L_{G}^{p}(\Omega) $ to $ \mathbb{R} $. The mapping $ \hat{\mathbb{E}}:L_{G}^{p}(\Omega)\rightarrow\mathbb{R} $ is referred to as the $ G $-expectation. Analogously, $ L_{G}^{p}(\Omega_{t}) $ can be defined for each fixed $ t \geq 0 $. The sublinear expectation space $(\Omega ,L_{G}^{1}(\Omega),\hat{\mathbb{E}})  $ is called $ G $-expectation space. 

Moreover, the conditional expectation of $ \xi=\varphi(B_{t_{1}},B_{t_{2}}-B_{t_{1}},\cdots,B_{t_{n}}-B_{t_{n-1}}) \in Lip(\Omega) $ under the $ G $-expectation space can be defined by
\[ \hat{\mathbb{E}}_{t_{i}}[\xi]=\phi(B_{t_{1}},\cdots,B_{t_{i}}-B_{t_{i-1}}), \]
where $ \phi(x_{1},\cdots,x_{i})=\hat{\mathbb{E}}[\varphi(x_{1},\cdots,x_{i},B_{t_{i+1}}-B_{t_{i}},\cdots,B_{t_{n}}-B_{t_{n-1}})],\ 0\leq i\leq n $. Analogously, for each fixed $ t \geq 0 $, $ \hat{\mathbb{E}}_{t}:Lip(\Omega)\rightarrow Lip(\Omega_{t}) $ can be continuously extended to the mapping from $ L_{G}^{p}(\Omega) $ to $ L_{G}^{p}(\Omega_{t}) $.  

On the  $ G $-expectation space, we have the following definition.

\begin{definition}
	Let $ Y\in (L_{G}^{1}(\Omega))^{d} $ be a $ d $-dimensional random vector. For each given $ t\geq0 $, $ Y $ is said to be independent from $ L_{G}^{1}(\Omega_{t}) $ if for any $ X\in L_{G}^{1}(\Omega_{t}) $, it holds that 
	\[\hat{\mathbb{E}}[\varphi(X,Y)]=\hat{\mathbb{E}}[\hat{\mathbb{E}}[\varphi(x,Y)]_{x=X}],\quad \forall\varphi\in C_{b,Lip}(\mathbb{R}^{d+1}). \]
\end{definition}

Let $ \langle B \rangle $ denote the quadratic variation process of $ d $-dimensional $ G $-Brownian motion $ B $. The following theorem can be consulted in \cite{peng2019nonlinear}.

\begin{theorem}\label{quad B}
	For $ t\geq 0 $, 
	\[\langle B \rangle_{t}\in t\Gamma:=\{t\times\gamma:\gamma\in\Gamma\}, \]
	where $ \Gamma $ satisfies (\ref{G}). Moreover, 
	\[\hat{\mathbb{E}}[\varphi(\langle B \rangle_{t})]=\sup_{\gamma\in \Gamma}\varphi(t\gamma), \quad \forall\varphi\in C_{l,Lip}(\mathbb{S}_{d}),\]
	where $ C_{l,Lip}(\mathbb{S}_{d}) $ denotes the linear space of all local Lipschitz functions on $ \mathbb{S}_{d} $.
\end{theorem}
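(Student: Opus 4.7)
The plan is to identify $\langle B\rangle_t$ as a maximally distributed random variable in Peng's sense, whose distribution is concentrated on the compact convex set $t\Gamma$, and then to read off both conclusions from a PDE characterization of such distributions.

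First I would establish that $\langle B\rangle$ has independent and stationary increments. Writing $\langle B\rangle_t$ as the $L_G^2$-limit of sums $\sum_k (B_{t_{k+1}}-B_{t_k})(B_{t_{k+1}}-B_{t_k})^{\mathrm{T}}$ over partitions of $[0,t]$, the corresponding property of $B$ transfers directly: $\langle B\rangle_{t+s}-\langle B\rangle_t$ is independent of $\mathcal{F}_t$ and has the same distribution as $\langle B\rangle_s$. The scaling $B_{\lambda t}\overset{d}{=}\sqrt{\lambda}B_t$ then gives $\langle B\rangle_{\lambda t}\overset{d}{=}\lambda\langle B\rangle_t$, so it suffices to characterize the law of $\langle B\rangle_1$.

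Second, for $\varphi\in C_{b,Lip}(\mathbb{S}_d)$ I would introduce the auxiliary function $u(t,A):=\hat{\mathbb{E}}[\varphi(A+\langle B\rangle_t)]$ for $A\in\mathbb{S}_d$. The tower property of $\hat{\mathbb{E}}$ combined with the previous step yields the semigroup identity $u(t+s,A)=\hat{\mathbb{E}}[u(s,A+\langle B\rangle_t)]$. A uniform Lipschitz estimate in $A$ and the scaling relation then force $u$ to be the unique viscosity solution of the first-order Hamilton--Jacobi equation
\begin{equation*}
\partial_t u(t,A)-2G(D_A u(t,A))=0,\qquad u(0,A)=\varphi(A),
\end{equation*}
where $D_A u$ is the symmetric matrix $(\partial_{A_{ij}}u)$ and $2G(M)=\sup_{\gamma\in\Gamma}\mathrm{tr}(\gamma M)$ by (\ref{G}). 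A direct verification shows that the explicit solution is $u(t,A)=\sup_{\gamma\in\Gamma}\varphi(A+t\gamma)$; setting $A=0$ gives the expectation formula. The support statement $\langle B\rangle_t\in t\Gamma$ then follows by taking $\varphi$ to be a nonnegative Lipschitz bump vanishing precisely on $t\Gamma$: the formula forces $\hat{\mathbb{E}}[\varphi(\langle B\rangle_t)]=0$, excluding any quasi-sure mass outside $t\Gamma$. Extension from $C_{b,Lip}$ to $C_{l,Lip}(\mathbb{S}_d)$ is standard by truncation since $t\Gamma$ is bounded.

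The main obstacle is the derivation of the Hamilton--Jacobi equation from the semigroup identity. Because $\langle B\rangle_t/t$ possesses no smooth density under $\hat{\mathbb{E}}$ and $u$ is only \emph{a priori} Lipschitz in $(t,A)$, one cannot differentiate the semigroup identity directly; the viscosity-solution property must be extracted by testing against smooth paraboloids, and the nonlinear Hamiltonian $2G$ must be identified through the sublinearity and monotonicity of $G$ together with its dual representation via $\Gamma$. Once this step is in place, the uniqueness of viscosity solutions for the comparison-principle-friendly equation above closes the argument.
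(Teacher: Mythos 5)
The paper does not prove this theorem; it is imported verbatim from Peng's book \cite{peng2019nonlinear}, where the argument is exactly the one you sketch: $\langle B\rangle$ has stationary, independent increments, hence $\langle B\rangle_1$ is maximally distributed, and its law is pinned down by the first-order equation $\partial_t u-2G(D_A u)=0$ whose solution is $\sup_{\gamma\in\Gamma}\varphi(A+t\gamma)$; the support statement and the extension to $C_{l,Lip}$ then follow as you say. Your sketch is correct in structure; the one point you leave implicit that actually identifies the Hamiltonian is the identity $\hat{\mathbb{E}}[\mathrm{tr}(P\langle B\rangle_{\delta})]=2G(P)\delta$ together with $\hat{\mathbb{E}}[|\langle B\rangle_{\delta}|^{2}]=O(\delta^{2})$, which comes from the fact that $B_tB_t^{\mathrm{T}}-\langle B\rangle_t$ is a symmetric martingale and $\hat{\mathbb{E}}[\langle PB_{\delta},B_{\delta}\rangle]=2G(P)\delta$; with that supplied, the viscosity sub/supersolution test goes through as in the $G$-heat-equation case.
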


\begin{definition}
	Let $(\Omega ,L_{G}^{1}(\Omega),\hat{\mathbb{E}})  $ be a $ G $-expectation space. A $ d $-dimensional process $ M_{t}\in (L_{G}^{1}(\Omega_{t}))^{d},\ t\geq0 $, is called a martingale if $ \hat{\mathbb{E}}_{s}[M_{t}]=M_{s} $ for each $ s\leq t $. Moreover, if $ d $-dimensional martingale $ M $ satisfies $\hat{\mathbb{E}}[M_{t}]=-\hat{\mathbb{E}}[-M_{t}]  $, then it is called a symmetric martingale.
\end{definition}

We now recall L\'{e}vy's martingale characterization of $ G $-Brownian motion. The readers may refer to \cite{xu2009martingale,xu2010martingale,song2013char,hu2019levy} for more details and the following results are given in \cite{hu2019levy}.

\begin{theorem}[\cite{hu2019levy}]\label{Levy}
	Let $ G $ be a given monotonic and sublinear function and let $(\Omega ,L_{G}^{1}(\Omega),\hat{\mathbb{E}}) $ be a $ G $-expectation space. Assume $ (M_{t})_{t\geq0} $ is a $ d $-dimensional symmetric martingale satisfying $ M_{0}=0 , M_{t}\in(L_{G}^{3}(\Omega_{t}))^{d} $ for each $ t\geq0 $ and $\sup\{\hat{\mathbb{E}}[|M_{t+\epsilon}-M_{t}|^{3}]:t\leq T\}=o(\epsilon)$ as $ \epsilon\downarrow0 $ for each $ T\geq0 $. If the process $ \frac{1}{2}\left\langle AM_{t},M_{t}\right\rangle-G(A)t ,t\geq0 $, is a martingale for each $ A\in\mathbb{S}_{d} $, then $ M $ is a $ G $-Brownian motion.
\end{theorem}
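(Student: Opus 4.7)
The overall strategy is to identify the finite-dimensional distributions of $M$ with those of a $G$-Brownian motion by showing that, for every sufficiently smooth test function $\varphi$, the composition of $\varphi$ with $M$ reduces to evaluating the $G$-heat equation along $M$. Concretely, fix $T>0$, $\varphi\in C_b^{\infty}(\mathbb{R}^{d})$, and let $u(s,x):=\hat{\mathbb{E}}[\varphi(x+\sqrt{s}\xi)]$ be the solution of the $G$-heat equation $\partial_s u - G(D_x^2 u)=0$ with $u(0,\cdot)=\varphi$. Set $v(t,x):=u(T-t,x)$, so that $\partial_t v + G(D_x^2 v)=0$ on $[0,T]$ with $v(T,\cdot)=\varphi$. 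If I can show that $\{v(t,M_t)\}_{t\in[0,T]}$ is a martingale, then taking expectation between $t=0$ and $t=T$ yields $\hat{\mathbb{E}}[\varphi(M_T)]=v(0,0)=\hat{\mathbb{E}}[\varphi(\sqrt{T}\xi)]$, which identifies $M_T\overset{d}{=}\sqrt{T}\xi$. The same argument carried out under $\hat{\mathbb{E}}_{t}$ instead of $\hat{\mathbb{E}}$ will simultaneously give the independence of $M_{t+s}-M_{t}$ from $L_G^{1}(\Omega_t)$ and the stationarity of the increments, finishing the characterization.

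The martingale property of $v(t,M_t)$ will be proved by the standard partition-and-Taylor-expand device. Partition $[t,t+s]$ into $n$ equal pieces of length $\epsilon=s/n$, telescope $v(t_{k+1},M_{t_{k+1}})-v(t_{k},M_{t_{k}})$, and on each subinterval expand to second order:
\begin{equation*}
v(t_{k+1},M_{t_{k+1}})-v(t_{k},M_{t_{k}})=\partial_t v\,\epsilon+\langle\nabla_x v,\Delta M\rangle+\tfrac{1}{2}\langle D_x^2 v\,\Delta M,\Delta M\rangle+R_k,
\end{equation*}
with derivatives evaluated at $(t_k,M_{t_k})$ and $\Delta M=M_{t_{k+1}}-M_{t_k}$. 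Taking $\hat{\mathbb{E}}_{t_k}$: the linear term vanishes because $M$ is a symmetric martingale, so $\hat{\mathbb{E}}_{t_k}[\langle\eta,\Delta M\rangle]=0$ for any $\mathcal{F}_{t_k}$-measurable $\eta$; the quadratic term equals $G(D_x^2 v(t_k,M_{t_k}))\,\epsilon$ thanks exactly to the hypothesis that $\tfrac{1}{2}\langle A M_{\cdot},M_{\cdot}\rangle - G(A)\cdot$ is a martingale for every $A\in\mathbb{S}_d$; and the $\partial_t v$ piece contributes $-G(D_x^2 v)\,\epsilon$, cancelling the previous one. Thus formally $\hat{\mathbb{E}}_{t_k}[v(t_{k+1},M_{t_{k+1}})-v(t_k,M_{t_k})]=\hat{\mathbb{E}}_{t_k}[R_k]$, and the third-moment hypothesis $\sup_t\hat{\mathbb{E}}[|\Delta M|^{3}]=o(\epsilon)$ together with boundedness of $D_x^3 v$ will give $\hat{\mathbb{E}}[|R_k|]=o(\epsilon)$ uniformly in $k$.

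The main obstacle is bookkeeping in the sublinear setting: subadditivity of $\hat{\mathbb{E}}$ only permits one-sided control, and the classical dominated convergence argument is unavailable, so the accumulated remainder $\sum_{k}R_k$ must be estimated through the sub-additivity of $\hat{\mathbb{E}}_{t_k}$ together with the tower property, yielding a total error of order $n\cdot o(\epsilon)=o(1)$ as $n\to\infty$. A secondary technical point is the regularity of $v$: for $\varphi\in C_b^{\infty}$ the condition $G(A)-G(B)\ge\tfrac{1}{2}\underline{\sigma}^{2}\mathrm{tr}[A-B]$ for $A\ge B$ ensures uniform ellipticity of the $G$-heat equation, which in turn gives the $C^{1,2}$-regularity with bounded derivatives needed to run the Taylor expansion. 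Once the martingale property of $v(\cdot,M_{\cdot})$ is established under $\hat{\mathbb{E}}_{t}$ for arbitrary starting time $t$, the independence and stationarity of increments follow, and a density argument extending from $\varphi\in C_b^{\infty}$ to $\varphi\in C_{b,Lip}(\mathbb{R}^{d})$ concludes the proof.
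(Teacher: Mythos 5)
This theorem is quoted from \cite{hu2019levy} and the paper gives no proof of it, so there is nothing internal to compare against; your sketch is, in outline, the standard argument used in that reference (and in the related proofs of Xu--Zhang and of Peng's central limit theorem): run the backward $G$-heat equation along $M$, show $v(t,M_t)$ is a martingale by partitioning and Taylor-expanding, kill the first-order term with the symmetric-martingale property, match the second-order term against $\partial_t v=-G(D^2_xv)$ via the hypothesis on $\tfrac12\langle AM_t,M_t\rangle-G(A)t$, and absorb the remainder with the third-moment condition; the conditional version then yields independence and stationarity of increments. The architecture is right.

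Two technical points deserve more care than you give them. First, the regularity you invoke is too strong: for a fully nonlinear uniformly parabolic equation the Krylov/Caffarelli--Cabr\'e--Wang interior estimates give $u\in C^{1+\alpha/2,2+\alpha}_{loc}$, not bounded $D^3_xv$, and the bound degenerates as the backward time approaches $0$. Consequently the remainder is \emph{not} dominated by $C|\Delta M|^3$; the correct estimate uses the modulus of continuity $\omega$ of $D^2_xv$ together with a truncation: on $\{|\Delta M|\le\delta\}$ the second-order error is at most $\omega(\delta)|\Delta M|^2$, and on $\{|\Delta M|>\delta\}$ it is at most $C|\Delta M|^3/\delta$. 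Summing over the $n=s/\epsilon$ subintervals and using $\hat{\mathbb{E}}_{t_k}[|\Delta M|^2]=2G(I)\epsilon$ (which follows from the hypothesis with $A=I$) gives a total error $Cs\,\omega(\delta)+\tfrac{Cs}{\delta}\cdot\tfrac{o(\epsilon)}{\epsilon}$; one lets $\epsilon\downarrow0$ first and then $\delta\downarrow0$. One also works on $[0,T-\kappa]$ where the interior estimates are uniform and then lets $\kappa\downarrow0$. Second, when you replace the quadratic term by $G(D^2_xv(t_k,M_{t_k}))\epsilon$, the matrix $A=D^2_xv(t_k,M_{t_k})$ is random, while the hypothesis is stated for deterministic $A$; you need the local property of $\hat{\mathbb{E}}_{t_k}$ and an approximation of $A$ by simple $\mathcal{F}_{t_k}$-measurable matrix-valued functions, and you need the martingale hypothesis for both $A$ and $-A$ to get the two-sided identities $\hat{\mathbb{E}}_{t_k}[\tfrac12\langle A\Delta M,\Delta M\rangle]=G(A)\epsilon$ and $\hat{\mathbb{E}}_{t_k}[-\tfrac12\langle A\Delta M,\Delta M\rangle]=G(-A)\epsilon$ that make the sublinear bookkeeping close. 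Neither issue changes the strategy, but as written the remainder estimate would not go through.
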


\begin{remark}
	Let  $ d=1 $ and let $ G(x)=\frac{1}{2}(\bar{\sigma}^{2}x^{+}-\underline{\sigma}^{2}x^{-}) $ where $ 0<\underline{\sigma}\leq\bar{\sigma}<\infty $. Let $ (M_{t})_{t\geq0} $ be a symmetric martingale satisfying $ M_{0}=0 , M_{t}\in L_{G}^{3}(\Omega_{t}) $ for each $ t\geq0 $. If $\sup\{\hat{\mathbb{E}}[|M_{t+\epsilon}-M_{t}|^{3}]:t\leq T\}=o(\epsilon)$ as $ \epsilon\downarrow0 $ for each $ T\geq0 $ and $ \{M_{t}^{2}-\bar{\sigma}^{2}t\}_{t\geq0} \ ,  \{-M_{t}^{2}+\underline{\sigma}^{2}t\}_{t\geq0} $ are martingales, then $ M $ is a one-dimensional $ G $-Brownian motion.
\end{remark}

The following is the representation theorem for $ G $-expectation.

\begin{theorem}[\cite{hu2009representation,denis2011function}]\label{thm2.1}
	There exist a weakly compact family $ \mathcal{P} $ of probability measures on $ (\Omega,\mathcal{F}) $ such that
	\begin{align}
		\hat{\mathbb{E}}[\xi]=\sup_{P\in\mathcal{P}}E_{P}[\xi], \quad \forall \xi\in L_{G}^{1}(\Omega).
	\end{align} 
\end{theorem}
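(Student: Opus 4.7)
The plan is to realize $\mathcal{P}$ as a family of Borel probability measures arising from Hahn--Banach extensions of $\hat{\mathbb{E}}$ restricted to $Lip(\Omega)$, and to verify weak compactness via uniform path-space tightness. I would fix the canonical realization $\Omega = C_0([0,\infty);\mathbb{R}^d)$ under uniform convergence on compacts, so that $Lip(\Omega)$ sits inside $C_b(\Omega)$ as a space of bounded continuous cylinder functionals and $\mathcal{F}$ is the Borel $\sigma$-algebra on $\Omega$.

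First, for each fixed $\xi \in Lip(\Omega)$, the sublinearity and positive homogeneity of $\hat{\mathbb{E}}$ let me invoke Hahn--Banach on the linear subspace $\mathbb{R}\xi$ to extend the tangent functional $\lambda\xi \mapsto \lambda\hat{\mathbb{E}}[\xi]$ to a linear functional $L_\xi : Lip(\Omega) \to \mathbb{R}$ dominated by $\hat{\mathbb{E}}$, and satisfying $L_\xi(\xi) = \hat{\mathbb{E}}[\xi]$. Monotonicity then forces $L_\xi \ge 0$ on nonnegative elements, and constant preservation gives $L_\xi(1) = 1$, so $L_\xi$ is a positive normalized linear functional.

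Next I would upgrade each $L_\xi$ to a countably additive Borel probability $P_\xi$ on $(\Omega,\mathcal{F})$. The decisive analytic input is the continuity-from-above of $\hat{\mathbb{E}}$ on $Lip(\Omega)$: whenever $\xi_n \in Lip(\Omega)$ decreases pointwise to $0$, one has $\hat{\mathbb{E}}[\xi_n] \downarrow 0$. Granted this, each $L_\xi$ inherits the same monotone continuity and the Daniell--Stone theorem yields $P_\xi$ with $E_{P_\xi}[\eta] = L_\xi(\eta)$ for all $\eta \in Lip(\Omega)$. Setting $\mathcal{P}_0 := \{P_\xi : \xi \in Lip(\Omega)\}$ immediately gives $\hat{\mathbb{E}}[\xi] = \sup_{P \in \mathcal{P}_0} E_P[\xi]$ on $Lip(\Omega)$, and the formula extends to $L_G^1(\Omega)$ by density together with the $L_G^1$-continuity of both sides. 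I would then let $\mathcal{P}$ be the weak closure of $\mathcal{P}_0$. Uniform bounds of the form $\sup_{P \in \mathcal{P}_0} E_P[|B_t - B_s|^{2p}] \le C_p|t-s|^p$, inherited directly from $\hat{\mathbb{E}}$-estimates on $G$-Brownian increments, combine with Kolmogorov's criterion and Prokhorov's theorem to yield weak compactness on $C_0([0,\infty);\mathbb{R}^d)$.

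The main obstacle is the continuity-from-above step. A pointwise decreasing sequence of cylinder functionals need not decrease uniformly, and $\hat{\mathbb{E}}$ is only described analytically via the $G$-heat equation. My plan is a tightness-plus-Dini argument: the moment bounds above let me make the contribution of the tail event $\{\sup_{s \le T}|B_s| > R\}$ uniformly small under $\hat{\mathbb{E}}$; on the complementary compact set of paths, Dini's theorem promotes the pointwise decrease to uniform convergence, after which monotonicity of $\hat{\mathbb{E}}$ closes the argument. Executing this tightness estimate carefully, using only the finite moments coming from the $G$-heat equation and the boundedness of the diffusion coefficient range $\Gamma$, is the technical heart of the representation theorem.
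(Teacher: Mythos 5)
Your overall architecture (Hahn--Banach tangent functionals $L_\xi\le\hat{\mathbb{E}}$ with $L_\xi(\xi)=\hat{\mathbb{E}}[\xi]$, positivity and normalization from monotonicity and constant preservation, extension from $Lip(\Omega)$ to $L_G^1(\Omega)$ by density, and Prokhorov applied to the closure of $\mathcal{P}_0$) is sound as far as it goes; the first step is exactly Peng's representation of a sublinear expectation as a supremum of \emph{finitely additive} linear functionals. The genuine gap is in the step you yourself identify as the technical heart: the continuity-from-above needed to invoke Daniell--Stone. Your tightness-plus-Dini plan fails because the set $\{\sup_{s\le T}|B_s|\le R\}$ is bounded but \emph{not compact} in $C_0([0,\infty);\mathbb{R}^d)$ with the locally uniform topology --- compactness requires equicontinuity --- so Dini's theorem cannot upgrade the pointwise decrease of $\xi_n$ to uniform decrease on it. A genuinely compact set of paths must also constrain the modulus of continuity, and to make $\hat{\mathbb{E}}$ of its complement small you would need a Kolmogorov-type maximal/modulus estimate under $\hat{\mathbb{E}}$ itself; but $\sup_{s\le T}|B_s|$ and the modulus of continuity are suprema over infinitely many increments, hence not cylinder functionals, and $\hat{\mathbb{E}}$ is at this stage only defined (and only finitely subadditive) on $Lip(\Omega)$ and its $L_G^1$-completion. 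Controlling such quantities is essentially equivalent to already having the weakly compact representing family, so the argument as written is circular exactly where countable additivity must be earned.

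There are two standard ways to close this. The proof in the cited references \cite{hu2009representation,denis2011function} is constructive rather than abstract: $\mathcal{P}$ is taken to be (the closure of) the laws of $\int_0^\cdot\theta_s\,\mathrm{d}W_s$ under a reference Wiener measure, with $\theta$ adapted and valued in the square roots of $\Gamma$; the identity $\hat{\mathbb{E}}[\varphi(B_{t_1},\dots,B_{t_n})]=\sup_\theta E[\varphi(\cdot)]$ is proved by backward induction via the dynamic programming principle and the comparison theorem for the $G$-heat equation. Countable additivity is then automatic, the moment bounds $E_P[|B_t-B_s|^{2p}]\le C_p|t-s|^p$ hold uniformly in $\theta$ because $\Gamma$ is bounded, and classical Kolmogorov tightness plus Prokhorov give weak compactness; the downward continuity you were trying to prove is then a \emph{corollary} of weak compactness, not an input. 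Alternatively, your abstract route can be repaired by applying the Riesz representation to the restriction of $L_\xi$ to cylinder functions over each fixed finite time set (there Dini does apply, since closed balls in $\mathbb{R}^{nd}$ are compact and tightness of the marginals follows from $\hat{\mathbb{E}}[|B_{t_i}|\wedge N]$ bounds), noting that the resulting finite-dimensional distributions are automatically consistent because they all come from the single functional $L_\xi$, then using Kolmogorov extension and, under each already-constructed probability $P_\xi$, the classical Kolmogorov continuity theorem to land on $C_0([0,\infty);\mathbb{R}^d)$. Either way, the Daniell--Stone-on-all-of-$Lip(\Omega)$ step as you sketched it must be replaced.
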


\begin{remark}
	Under each $ P\in\mathcal{P} $, the $ G $-Brownian motion $ (B_{t})_{t\geq0} $ is a martingale.
\end{remark}

For this $ \mathcal{P} $, we define capacity
\[\mathrm{c}(A):=\sup_{P\in\mathcal{P}}P(A), \ A\in\mathcal{F}.\]

\begin{definition}
	A set $ A $ is polar if $ \mathrm{c}(A)=0 $ and a property holds “quasi-surely” (q.s.) if it holds outside a polar set.
\end{definition}

\begin{definition}
	Let $ M_{G}^{0}(0,T) $ be the collection of processes of the following form: for a given partition $ \{t_{0},\cdots,t_{N}=\pi_{T}\} $ of $ [0,T] $,
	\[\eta_{t}(\omega)=\sum_{i=0}^{N-1}\xi_{i}(\omega)I_{[t_{i},t_{i+1})}(t),\]
	where $ \xi_{i}\in Lip(\Omega_{t_{i}}), i=0,1,\ldots,N-1 $. For each $ p\geq1 $, we denote by $ M_{G}^{p}(0,T) $ the completion of $ M_{G}^{0}(0,T) $ under the norm $ \|\eta\|_{M_{G}^{p}}:=(\hat{\mathbb{E}}[\int_{0}^{T}|\eta_{t}|^{p}\mathrm{d}t])^{\frac{1}{p}} $.
\end{definition}

According to \cite{li2011stopping,peng2007G,peng2008multi,peng2019nonlinear}, the $ G $-It\^{o} integral $ \int_{0}^{t}\eta_{s}\mathrm{d}B_{s}, \ \int_{0}^{t}\xi_{s}\mathrm{d}\langle B\rangle_{s} $ are will-defined for $ \eta\in M_{G}^{2}(0,T) $ and $ \xi\in M_{G}^{1}(0,T) $.

Now we consider the following SDE on the $ G $-expectation space $(\Omega ,L_{G}^{1}(\Omega),\hat{\mathbb{E}})  $ and $ B $ is a one-dimensional $ G $-Brownian motion: for each given $ 0\leq T<\infty $ and $ x_{0}\in\mathbb{R} $,
\begin{align}\label{GSDE}
	\begin{cases}
		\mathrm{d}X_{t}=b(t,X_{t})\mathrm{d}t+h(t,X_{t})\mathrm{d}\langle B\rangle_{s}+\sigma(t,X_{t})\mathrm{d}B_{t},  \quad t\in[0,T],\\
		X_{0}=x_{0},
	\end{cases}		
\end{align}
where $ b,h,\sigma: \mathbb{R}^{2}\rightarrow\mathbb{R} $ are given deterministic functions satisfying the following conditions:

\begin{description}
	\item[(H1)] $ b(\cdot,x),h(\cdot,x),\sigma(\cdot,x)\in M_{G}^{2}(0,T) $ for each $ x\in\mathbb{R} $;
	\item[(H2)] The functions $ b,h,\sigma $ are Lipschitz in $ x $ with Lipschitz constant $ L $.
\end{description}

\begin{theorem}[\cite{peng2019nonlinear}]\label{thmsde}
	If (H1) and (H2) hold, then the stochastic differential equation (\ref{GSDE}) has a unique solution $ X\in M_{G}^{2}(0,T) $.
\end{theorem}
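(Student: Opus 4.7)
The plan is to prove existence and uniqueness of the solution to (\ref{GSDE}) by a Picard fixed-point argument carried out in the Banach space $M_G^2(0,T;\mathbb{R}^n)$, equipped with an exponentially weighted but equivalent norm that converts the linear-in-time growth of the standard estimates into a strict contraction on the whole interval $[0,T]$. Concretely, for $Y\in M_G^2(0,T;\mathbb{R}^n)$ I would introduce
\[
\Phi(Y)_t := x_0 + \int_0^t b(s,Y_s)\,ds + \int_0^t h_{ij}(s,Y_s)\,d\langle B^{(i)},B^{(j)}\rangle_s + \int_0^t \sigma_j(s,Y_s)\,dB_s^{(j)},
\]
and show that $\Phi$ is a well-defined self-map of $M_G^2(0,T;\mathbb{R}^n)$ and a contraction in a suitable norm.

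First I would check $\Phi(Y)\in M_G^2(0,T;\mathbb{R}^n)$. Assumption (H2) gives $|b(s,Y_s)|\le L|Y_s|+|b(s,0)|$, and analogously for $h_{ij},\sigma_j$, while (H1) places $b(\cdot,0),h_{ij}(\cdot,0),\sigma_j(\cdot,0)$ in $M_G^2$; hence each integrand lies in $M_G^2$ and all three integrals are well-defined. Combining Cauchy--Schwarz for the drift, the $G$-It\^o isometry $\hat{\mathbb{E}}[|\int_0^T\eta_s\,dB_s^{(j)}|^2]\le C\,\hat{\mathbb{E}}[\int_0^T|\eta_s|^2\,ds]$ for the stochastic integral, and the quasi-sure bound $|d\langle B^{(i)},B^{(j)}\rangle_s|\le \kappa\,ds$ (which follows from Theorem~\ref{quad B} and the boundedness of $\Gamma$) for the quadratic-variation term, I obtain $\|\Phi(Y)\|_{M_G^2}^2\le C_1(T,L,\kappa)(1+\|Y\|_{M_G^2}^2)$.

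Next I would establish the contraction property. For $Y,Z\in M_G^2(0,T;\mathbb{R}^n)$, applying the same three estimates to the Lipschitz differences yields
\[
\hat{\mathbb{E}}[|\Phi(Y)_t-\Phi(Z)_t|^2] \le C_2 \int_0^t \hat{\mathbb{E}}[|Y_s-Z_s|^2]\,ds,
\]
with $C_2$ depending only on $T, L, \kappa$. Multiplying by $e^{-\beta t}$, integrating over $[0,T]$ and switching the order of integration turns the right-hand side into $\tfrac{C_2}{\beta}\|Y-Z\|_\beta^2$ in the equivalent weighted norm
\[
\|Y\|_\beta := \Big( \hat{\mathbb{E}}\Big[\int_0^T e^{-\beta t}|Y_t|^2\,dt\Big] \Big)^{1/2}.
\]
Choosing $\beta>C_2$ makes $\Phi$ a strict contraction, and the Banach fixed-point theorem produces the unique $X\in M_G^2(0,T;\mathbb{R}^n)$ solving (\ref{GSDE}). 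Uniqueness among $M_G^2$-solutions then follows directly from the same bound via Gronwall's inequality applied to $\hat{\mathbb{E}}[|X_t-X'_t|^2]$.

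The main obstacle is the quadratic-variation integral $\int_0^t h_{ij}(s,Y_s)\,d\langle B^{(i)},B^{(j)}\rangle_s$: because $\langle B^{(i)},B^{(j)}\rangle$ is not deterministic under $\hat{\mathbb{E}}$, one cannot simply move the expectation inside as in the linear case. The quasi-sure bound $|d\langle B^{(i)},B^{(j)}\rangle_s|\le\kappa\,ds$ coming from Theorem~\ref{quad B} is what makes this term pathwise controllable by an ordinary Lebesgue integral; once that observation is in place, the sublinearity of $\hat{\mathbb{E}}$ interacts only with classical ingredients and the rest of the proof is a direct transcription of the linear Picard iteration into the $G$-framework.
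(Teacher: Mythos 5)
The paper offers no proof of this theorem; it is quoted from Peng's book, where the argument is exactly the Picard/contraction scheme you propose. Your outline of the self-map property, the three elementary estimates (Cauchy--Schwarz for the drift, the It\^o isometry bound $\hat{\mathbb{E}}[|\int_0^T\eta_s\,dB_s^{(j)}|^2]\le\bar{\sigma}^2\hat{\mathbb{E}}[\int_0^T|\eta_s|^2\,ds]$, and the quasi-sure density bound on $\mathrm{d}\langle B^{(i)},B^{(j)}\rangle_s$ coming from $\langle B\rangle_t\in t\Gamma$ with $\Gamma$ bounded) is the right skeleton, and your identification of the quadratic-variation term as the only genuinely non-classical ingredient is accurate.

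There is, however, one step that fails as written, and it is precisely a sublinearity issue. Your contraction estimate, after multiplying by $e^{-\beta t}$ and exchanging the order of the $ds$- and $dt$-integrations, reads
\[
\int_0^T e^{-\beta t}\,\hat{\mathbb{E}}\bigl[|\Phi(Y)_t-\Phi(Z)_t|^2\bigr]\,\mathrm{d}t\;\le\;\frac{C_2}{\beta}\int_0^T e^{-\beta s}\,\hat{\mathbb{E}}\bigl[|Y_s-Z_s|^2\bigr]\,\mathrm{d}s ,
\]
i.e.\ both sides carry $\hat{\mathbb{E}}$ \emph{inside} the time integral. But the norm $\|Y\|_\beta$ you define puts $\hat{\mathbb{E}}$ \emph{outside}. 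For a sublinear expectation one only has $\hat{\mathbb{E}}[\int_0^T f_t\,\mathrm{d}t]\le\int_0^T\hat{\mathbb{E}}[f_t]\,\mathrm{d}t$, and this inequality points the wrong way on the right-hand side: you cannot dominate $\int_0^T e^{-\beta s}\hat{\mathbb{E}}[|Y_s-Z_s|^2]\,\mathrm{d}s$ by $\|Y-Z\|_\beta^2$. So $\Phi$ is not shown to be a contraction in the norm you fixed, and the Banach fixed-point theorem does not apply as stated. Two standard repairs: either work throughout with the stronger quantity $N_\beta(Y)^2:=\int_0^Te^{-\beta t}\hat{\mathbb{E}}[|Y_t|^2]\,\mathrm{d}t$ (and then verify that the iteration stays in, and converges in, $M_G^2$, using $\|\cdot\|_{M_G^2}\le N_0(\cdot)$ at the end), or --- as in the cited source --- run the explicit Picard iteration, obtain the pointwise-in-$t$ factorial bound $\hat{\mathbb{E}}[|X^{n+1}_t-X^n_t|^2]\le M\,(C t)^{n}/n!$, and only then integrate in $t$ and apply $\hat{\mathbb{E}}[\int\cdot]\le\int\hat{\mathbb{E}}[\cdot]$ in the direction in which it actually holds to conclude that $(X^n)$ is Cauchy in $M_G^2$. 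With either repair your argument goes through; the uniqueness step via Gronwall applied to $t\mapsto\hat{\mathbb{E}}[|X_t-X_t'|^2]$ is fine. A smaller point worth making explicit: the claim that $Y\in M_G^2$ implies $b(\cdot,Y_\cdot)\in M_G^2$ uses (H2) together with approximation of $Y$ by step processes, not just the linear growth bound.
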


\section{$ G $-Bessel processes}\label{s3}

In this section, we introduce the notion of $ G $-Bessel processes. For each integer $ d\geq2 $, we consider a $ d $-dimensional $ G $-Brownian motion $ B=(B^{(1)},\cdots,B^{(d)}) $. For each $ x\in\mathbb{R}^{d} $, let $ B^{x}=B+x $ be a $ d $-dimensional $ G $-Brownian motion starting at $ x $ and let $ B^{x}=(\bar{B}^{(1)},\cdots,\bar{B}^{(d)}) $. Note that $ \mathrm{d}B^{(i)}_{t}=\mathrm{d}\bar{B}^{(i)}_{t},\ \mathrm{d}\langle B^{(i)},B^{(j)}\rangle_{t}=\mathrm{d}\langle \bar{B}^{(i)},\bar{B}^{(j)}\rangle_{t} $ for $ 1\leq i,j\leq d$. For notational simplicity, we write $ \mathrm{d}B^{(i)}_{t},\ \mathrm{d}\langle B^{(i)},B^{(j)}\rangle_{t} $ instead of $ \mathrm{d}\bar{B}^{(i)}_{t},\ \mathrm{d}\langle \bar{B}^{(i)},\bar{B}^{(j)}\rangle_{t} $. Analogous to the classical case, we define the non-negative process by
\[R_{t}:=|B_{t}^{x}|,\quad 0\leq t<\infty.\]
Roughly speaking, by applying the It\^{o} formula to $ R_{t} $, we obtain that
\begin{align}\label{eq3.1}
	\begin{cases}
		\mathrm{d}R_{t}=\sum\limits_{i=1}^{d}\frac{\bar{B}_{t}^{(i)}}{R_{t}}\mathrm{d}B_{t}^{(i)}+\sum\limits_{i=1}^{d}\frac{1}{2R_{t}}\mathrm{d}\langle B^{(i)} \rangle_{t}-\sum\limits_{i=1}^{d}\frac{(\bar{B}_{t}^{(i)})^{2}}{2R_{t}^{3}}\mathrm{d}\langle B^{(i)} \rangle_{t}-\sum\limits_{1\leq i<j\leq d}\frac{\bar{B}_{t}^{(i)}\bar{B}_{t}^{(j)}}{R_{t}^{3}}\mathrm{d}\langle B^{(i)},B^{(j)}\rangle_{t},\\
		R_{0}=|x|.
	\end{cases}
\end{align}

According to Theorem \ref{Levy}, the following assumption is required for $ \beta_{t}=\sum_{i=1}^{d}\int_{0}^{t}\frac{\bar{B}_{s}^{(i)}}{R_{s}}\mathrm{d}B_{s}^{(i)},\ t\geq0 $, to be a one-dimensional $ G' $-Brownian motion where $ G'(a)=\frac{1}{2}(\bar{\sigma}^{2}a^{+}-\underline{\sigma}^{2}a^{-}),\ \bar{\sigma}^{2}=\hat{\mathbb{E}}[\langle \beta \rangle_{1}],\ \underline{\sigma}^{2}=-\hat{\mathbb{E}}[-\langle \beta \rangle_{1}], \ \forall a\in\mathbb{R}  $.  
\begin{description}
	\item[(H)] For $ t\geq0 $ and $ 1\leq i,j\leq d,\ i\neq j $,
	\[\langle B^{(1)} \rangle_{t}=\langle B^{(i)} \rangle_{t},\quad \langle B^{(i)},B^{(j)}\rangle_{t}=0. \]
\end{description}
Due to assumption (H), we obtain that $ \langle \beta \rangle_{t}=\langle B^{(1)} \rangle_{t} $. Briefly speaking, if (H) holds, then $ \beta $ can be considered as a one-dimensional $ G' $-Brownian motion. Furthermore, equation (\ref{eq3.1}) reduces to 

\begin{align}\label{formally}
	\begin{cases}
		\mathrm{d}R_{t}=\mathrm{d}\beta_{t}+\frac{d-1}{2R_{t}}\mathrm{d}\langle \beta \rangle_{t},\\
		R_{0}=|x|.
	\end{cases}
\end{align}

\begin{remark}
	It is important to note that assumption (H) is not a necessary condition for $ \beta $ to be a one-dimensional $ G' $-Brownian motion. For example, let $ d=2 $ and let $ G(A)=G'(a_{11})+G'(a_{22}) $ where $ G'(x)=\frac{1}{2}(\bar{\sigma}^{2}x^{+}-\underline{\sigma}^{2}x^{-}) $ for $ x\in \mathbb{R} ,\ 0<\underline{\sigma}\leq\bar{\sigma}<\infty $.. Then $ \langle B^{(1)},B^{(2)}\rangle_{t}=0,\ \langle B^{(1)}\rangle_{t}\neq\langle B^{(2)}\rangle_{t} $ for $ t\geq0 $. Now $\langle \beta\rangle_{t}=\int_{0}^{t}\frac{(\bar{B}^{(1)}_{s})^{2}}{R_{s}^{2}}\mathrm{d}\langle B^{(1)}\rangle_{s}+\int_{0}^{t}\frac{(\bar{B}^{(2)}_{s})^{2}}{R_{s}^{2}}\mathrm{d}\langle B^{(2)}\rangle_{s}  $. Denote $ \frac{(\bar{B}^{(1)}_{s})^{2}}{R_{s}^{2}},\  \frac{(\bar{B}^{(2)}_{s})^{2}}{R_{s}^{2}} $ by $ \tilde{a}_{s},\  \tilde{b}_{s} $. From Theorem 4.16 of \cite{hu2016quasi}, we obtain that $ \tilde{a}, \tilde{b}\in M_{G}^{2}(0,T) $. Then let $ \tilde{a}_{s}^{n}=\sum_{k=0}^{n-1}\xi_{k}I_{[t_{k},t_{k+1} )}(s), \ \xi_{k}\in[0,1], \ 0=t_{0}<t_{1}<\cdots<t_{n}=t $ and $ \tilde{b}_{s}^{n}=1-\tilde{a}_{s}^{n} $ be simple processes such that $ \tilde{a}^{n}, \tilde{b}^{n} $ converge to $\tilde{a}, \tilde{b}  $ in $ M_{G}^{2}(0,T) $ as $ n\rightarrow\infty $. By Theorem \ref{quad B}, we get
	\begin{align*}
		{}&\hat{\mathbb{E}}_{t_{n-1}}\left[\int_{t_{n-1}}^{t_{n}}\tilde{a}_{r}^{n}\mathrm{d}\langle B^{(1)}\rangle_{r}+\int_{t_{n-1}}^{t_{n}}\tilde{b}_{r}^{n}\mathrm{d}\langle B^{(2)}\rangle_{r}\right]\\
		={}&\hat{\mathbb{E}}_{t_{n-1}}[\xi_{t_{n-1}}(\langle B^{(1)}\rangle_{t_{n}}-\langle B^{(1)}\rangle_{t_{n-1}})+(1-\xi_{t_{n-1}})(\langle B^{(2)}\rangle_{t_{n}}-\langle B^{(2)}\rangle_{t_{n-1}})]\\
		={}&\hat{\mathbb{E}}_{t_{n-1}}[x(\langle B^{(1)}\rangle_{t_{n}}-\langle B^{(1)}\rangle_{t_{n-1}})+(1-x)(\langle B^{(2)}\rangle_{t_{n}}-\langle B^{(2)}\rangle_{t_{n-1}})]_{x=\xi_{t_{n-1}}}\\
		={}&\bar{\sigma}^{2}(t_{n}-t_{n-1}).
	\end{align*} 
	Then we can continuously extend the above equality to the case $ \tilde{a}, \tilde{b} $ and get
	\[\hat{\mathbb{E}}_{s}\left[\int_{s}^{t}\tilde{a}_{r}\mathrm{d}\langle B^{(1)}\rangle_{r}+\int_{s}^{t}\tilde{b}_{r}\mathrm{d}\langle B^{(2)}\rangle_{r}\right]=\bar{\sigma}^{2}(t-s).\] 
	Therefore, one can check that $ \{\beta_{t}^{2}-\bar{\sigma}^{2}t\}_{t\geq0} $ is a martingale. Similarly, $ \{-\beta_{t}^{2}+\underline{\sigma}^{2}t\}_{t\geq0} $ is also a martingale. By Theorem \ref{Levy}, $ \beta $ is a one-dimensional $ G' $-Brownian motion. In this case , since $ \langle B^{(1)}\rangle\neq\langle B^{(2)}\rangle $, equation (\ref{eq3.1}) takes the following form,
    \begin{align*}
    	\begin{cases}
    		\mathrm{d}R_{t}=\mathrm{d}\beta_{t}+\frac{\mathrm{d}\langle B^{(1)} \rangle_{t}+\mathrm{d}\langle B^{(2)} \rangle_{t}}{2R_{t}}-\frac{1}{2R_{t}}\mathrm{d}\langle \beta \rangle_{t},\\
    		R_{0}=|x|.
    	\end{cases}
    \end{align*}
    We cannot obtain the equation (\ref{formally}) satisfied by $ R $. In order to obtain equation (\ref{formally}), we need condition $ \langle B^{(1)}\rangle=\langle B^{(2)}\rangle =\langle \beta\rangle $. Then we obtain assumption (H). 
\end{remark}

According to Theorem \ref{quad B}, the function $ G $ must satisfy the following condition for (H) to hold:

\begin{description}
	\item[(A)]  
	For each $ A=(a_{ij})_{1\leq i\leq j\leq d}\in \mathbb{S}_{d} $, 
	\[ G(A)=G'(\sum_{i=1}^{d}a_{ii}),  \] where $G'(a)=\frac{1}{2}(\bar{\sigma}^{2}a^{+}-\underline{\sigma}^{2}a^{-})$ for $ a\in \mathbb{R} ,\ 0<\underline{\sigma}\leq\bar{\sigma}<\infty $.
	
\end{description}

\begin{lemma}\label{lem4.1}
	Let $ B_{t}=(B_{t}^{(1)},\cdots,B_{t}^{(d)}),\ t\geq0 $, be the $ d $-dimensional $ G $-Brownian motion. Then the following conditions are equivalent: 
	\begin{description}
		\item[(1)] The function $ G $ satisfies the condition (A);
		\item[(2)] The quadratic variation process of $ B $ satisfies (H).
	\end{description}
	
\end{lemma}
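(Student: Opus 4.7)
The plan is to reduce the lemma to a statement about the representing set $\Gamma$ of $G$ appearing in (\ref{G}). The workhorse is Theorem \ref{quad B} applied to the linear (hence locally Lipschitz) test function $\varphi(\gamma)=\mathrm{tr}(A\gamma)$ for $A\in\mathbb{S}_d$, which yields the key identity
\begin{equation*}
\hat{\mathbb{E}}[\mathrm{tr}(A\langle B\rangle_t)] = 2tG(A), \qquad A\in\mathbb{S}_d,
\end{equation*}
together with the quasi-sure inclusion $\langle B\rangle_t\in t\Gamma$.

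For the implication (1)$\Rightarrow$(2), I would assume (A) and rewrite
\begin{equation*}
G(A)=G'(\mathrm{tr}A)=\tfrac{1}{2}\sup_{\sigma^{2}\in[\underline{\sigma}^{2},\bar{\sigma}^{2}]}\sigma^{2}\,\mathrm{tr}A=\tfrac{1}{2}\sup_{\gamma\in\tilde{\Gamma}}\mathrm{tr}(\gamma A),
\end{equation*}
where $\tilde{\Gamma}=\{\sigma^{2}I_{d}:\underline{\sigma}^{2}\leq\sigma^{2}\leq\bar{\sigma}^{2}\}$. Since the bounded closed convex subset of $\mathbb{S}_{d}^{+}$ whose support function is $2G$ is unique, $\Gamma=\tilde{\Gamma}$. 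Theorem \ref{quad B} then gives $\langle B\rangle_{t}\in t\Gamma$ q.s., so $\langle B\rangle_{t}$ is a (possibly random) non-negative scalar multiple of $I_{d}$; reading off the entries forces $\langle B^{(i)}\rangle_{t}=\langle B^{(1)}\rangle_{t}$ and $\langle B^{(i)},B^{(j)}\rangle_{t}=0$ for $i\neq j$, which is (H).

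For the converse (2)$\Rightarrow$(1), I would use (H) to write $\langle B\rangle_{t}=\langle B^{(1)}\rangle_{t}I_{d}$ q.s., so that $\mathrm{tr}(A\langle B\rangle_{t})=(\mathrm{tr}A)\,\langle B^{(1)}\rangle_{t}$. Define $\bar{\sigma}^{2}:=\hat{\mathbb{E}}[\langle B^{(1)}\rangle_{1}]$ and $\underline{\sigma}^{2}:=-\hat{\mathbb{E}}[-\langle B^{(1)}\rangle_{1}]$; by the scaling and stationarity of $G$-Brownian motion one obtains $\hat{\mathbb{E}}[\langle B^{(1)}\rangle_{t}]=t\bar{\sigma}^{2}$ and $\hat{\mathbb{E}}[-\langle B^{(1)}\rangle_{t}]=-t\underline{\sigma}^{2}$ (finiteness of $\bar{\sigma}^{2}$ and positivity of $\underline{\sigma}^{2}$ come from the standing hypothesis on $G$). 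A short case split on the sign of $\mathrm{tr}A$, using positive homogeneity of $\hat{\mathbb{E}}$, then gives
\begin{equation*}
2tG(A)=\hat{\mathbb{E}}[(\mathrm{tr}A)\langle B^{(1)}\rangle_{t}]=t\bigl(\bar{\sigma}^{2}(\mathrm{tr}A)^{+}-\underline{\sigma}^{2}(\mathrm{tr}A)^{-}\bigr),
\end{equation*}
that is, $G(A)=G'(\mathrm{tr}A)$, which is (A).

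I expect the main obstacle to lie in the forward direction: one must justify that $\Gamma$ is uniquely pinned down by $G$ via (\ref{G}), and then carefully translate the quasi-sure inclusion $\langle B\rangle_{t}\in t\Gamma$ into the pathwise statements about $\langle B^{(i)}\rangle_{t}$ and $\langle B^{(i)},B^{(j)}\rangle_{t}$ required by (H). The converse direction, by contrast, amounts to the above identity together with a clean case analysis and is essentially routine.
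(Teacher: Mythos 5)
Your proposal is correct, and it diverges from the paper most noticeably in the converse direction. For (1)$\Rightarrow$(2) the two arguments are essentially the same: both identify the representing set as $\Gamma=\{\nu I_{d}:\nu\in[\underline{\sigma}^{2},\bar{\sigma}^{2}]\}$ and then transfer this to $\langle B\rangle_{t}$ via Theorem \ref{quad B}; the paper does so by testing against the nonnegative functions $\varphi(A)=|a_{11}-a_{ii}|$ and $\phi(A)=|a_{ij}|$ (whose sup over $t\Gamma$ vanishes, forcing $\varphi(\langle B\rangle_{t})=0$ q.s.), while you invoke the quasi-sure inclusion $\langle B\rangle_{t}\in t\Gamma$ directly. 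Your explicit appeal to the uniqueness of the closed convex set with a given support function is the right justification for the step the paper dismisses with ``It is clear that.'' For (2)$\Rightarrow$(1), however, the paper runs the test-function argument backwards: from $\sup_{\gamma\in\Gamma}\varphi(t\gamma)=0$ it deduces that every $\gamma\in\Gamma$ is a scalar multiple of $I_{d}$, and then reads off $G$ from the reduced $\Gamma$ (asserting ``Obviously'' that the admissible scalars form the interval $[\underline{\sigma}^{2},\bar{\sigma}^{2}]$). You instead bypass $\Gamma$ entirely, computing $2tG(A)=\hat{\mathbb{E}}[\mathrm{tr}(A\langle B\rangle_{t})]=\hat{\mathbb{E}}[(\mathrm{tr}A)\langle B^{(1)}\rangle_{t}]$ and evaluating the right-hand side by positive homogeneity after a sign split on $\mathrm{tr}A$. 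This is a clean alternative that defines $\bar{\sigma}^{2}$ and $\underline{\sigma}^{2}$ intrinsically as $\pm\hat{\mathbb{E}}[\pm\langle B^{(1)}\rangle_{1}]$ and avoids having to argue that the scalar slice of $\Gamma$ is exactly a closed interval; the paper's route, in exchange, gives slightly more structural information about $\Gamma$ itself. Both are complete proofs.
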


\begin{proof} 
	$ (1)\Rightarrow(2)  $ It is clear that  
	\[ G(A)=\frac{1}{2}\sup_{\gamma\in \Gamma}\mathrm{tr}[\gamma A], \quad \text{for } A\in\mathbb{S}_{d},\]
	where $ \Gamma=\{\mathrm{diag}[\nu,\cdots,\nu]: \nu\in[\underline{\sigma}^{2},\bar{\sigma}^{2}]\} $.
	Now for each fixed $ i $, let $ \varphi(A)=\left| a_{11}-a_{ii}\right|  $. By Theorem \ref{quad B}, we yield
	\[\hat{\mathbb{E}}[\varphi(\left\langle B\right\rangle_{t} )]=\sup_{\gamma\in \Gamma}\varphi(t\gamma)=0.\]
	Since $ \varphi\geq0 $, it follows that $ \left| \langle B^{(1)} \rangle_{t}-\langle B^{(i)} \rangle_{t}\right|=0 $, q.s. Next for each fixed $ i,j $ and $ i\neq j $, set $ \phi(A)=|a_{ij}| $. Also by Theorem \ref{quad B}, we obtain
	\[ \hat{\mathbb{E}}[\phi(\left\langle B\right\rangle_{t})]=\sup_{\gamma\in \Gamma}\phi(t\gamma)=0.  \]
	Due to the fact that $ \phi\geq0 $, it follows that $ \left| \langle B^{(i)},B^{(j)}\rangle_{t} \right|=0 $ q.s.
	
	$ (2)\Rightarrow(1) $ Let us define $ \varphi $ and $ \phi $ as above. By Theorem \ref{quad B}, $ \sup_{\gamma\in \Gamma}\varphi(t\gamma)=0 $. Notice that $ \varphi\geq0 $. Then we obtain $\varphi(t\gamma)=0  $ for each $ \gamma\in \Gamma $. Consequently, the diagonal elements of the matrix $ \gamma\in\Gamma $ must be identical. Similarly, $ \phi(t\gamma)=0 $ holds for every $ \gamma\in\Gamma $. Therefore, all off-diagonal elements of the matrix $ \gamma\in\Gamma $ must be zero. Obviously $\Gamma=\{\mathrm{diag}[\nu,\cdots,\nu]: \nu\in[\underline{\sigma}^{2},\bar{\sigma}^{2}]\}   $, then  \[G(A)=\frac{1}{2}\sup_{\gamma\in \Gamma}\mathrm{tr}[\gamma A]=\sup_{\nu\in[\underline{\sigma}^{2},\bar{\sigma}^{2}]}\frac{\nu}{2}\mathrm{tr}[A]. \] 
	This completes the proof.
\end{proof}

\begin{lemma}[rotation invariance]\label{lem3.2}
	Let $ G $ be a function satisfying the condition (A) and let $ B^{x}=B+x $ be the  $ d $-dimensional $ G $-Brownian motion starting at $ x $. Then for any $ d\times d $ orthogonal matrix $ Q $, the process $ X_{t}=QB_{t}^{x}, t\geq0 $, is a $ d $-dimensional $ G $-Brownian motion starting at $ Qx $.
\end{lemma}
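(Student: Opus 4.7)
The plan is to apply L\'{e}vy's martingale characterization (Theorem \ref{Levy}) to the translated process $M_t := X_t - Qx = QB_t$. If we can show that $M$ is a $d$-dimensional $G$-Brownian motion (starting at $0$), then $X_t = M_t + Qx$ is a $G$-Brownian motion starting at $Qx$ by translation, which is the assertion of the lemma.

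The routine hypotheses of Theorem \ref{Levy} follow from the orthogonality of $Q$. Each component $M_t^{(i)} = \sum_{j=1}^d Q_{ij} B_t^{(j)}$ is a finite linear combination of entries of $B_t \in (L_G^3(\Omega_t))^d$, so $M_t \in (L_G^3(\Omega_t))^d$ and $M_0 = 0$. Symmetric martingality transfers from $B$ to $M$ by linearity of conditional $G$-expectation on symmetric martingales. The third-moment bound is immediate from the isometry property: $|M_{t+\epsilon} - M_t| = |Q(B_{t+\epsilon} - B_t)| = |B_{t+\epsilon} - B_t|$, so $\hat{\mathbb{E}}[|M_{t+\epsilon}-M_t|^3] = \hat{\mathbb{E}}[|B_{t+\epsilon}-B_t|^3] = o(\epsilon)$ uniformly on $[0,T]$.

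The substantive step is to verify that $\frac{1}{2}\langle A M_t, M_t\rangle - G(A)t$ is a martingale for every $A\in\mathbb{S}_d$. Using $M_t = QB_t$, I would rewrite
\[
\tfrac{1}{2}\langle A M_t, M_t\rangle = \tfrac{1}{2}\langle AQ B_t, Q B_t\rangle = \tfrac{1}{2}\langle A' B_t, B_t\rangle, \qquad A' := Q^{T} A Q \in \mathbb{S}_d.
\]
Since $B$ itself is a $d$-dimensional $G$-Brownian motion, Theorem \ref{Levy} (applied to $B$) guarantees that $\frac{1}{2}\langle A' B_t, B_t\rangle - G(A')t$ is a martingale. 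Thus it suffices to prove $G(A) = G(A')$. Here condition (A) is essential: since $G(A) = G'(\operatorname{tr}(A))$ depends on $A$ only through its trace, and the cyclic property together with $Q Q^{T} = I$ gives $\operatorname{tr}(A') = \operatorname{tr}(Q^{T}AQ) = \operatorname{tr}(A)$, we obtain $G(A') = G(A)$ and the martingale property transfers to $M$.

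The main (and essentially only) obstacle is the trace-invariance step in the last paragraph; everything else is a bookkeeping exercise exploiting the isometry of $Q$. Once the four hypotheses of Theorem \ref{Levy} are verified, the conclusion that $X_t = QB_t^x$ is a $d$-dimensional $G$-Brownian motion starting at $Qx$ is immediate.
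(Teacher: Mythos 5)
Your proof is correct, but it takes a different route from the paper. The paper simply invokes Proposition 1.5 in Chapter III of Peng's book, which says that for any matrix $Q$ the process $QB$ is a $G_Q$-Brownian motion with $G_Q(A)=\frac{1}{2}\hat{\mathbb{E}}[\langle AQB_1,QB_1\rangle]=G(Q^{T}AQ)$, and then concludes $G_Q=G$ exactly as you do, via $\operatorname{tr}(Q^{T}AQ)=\operatorname{tr}(A)$ under condition (A). You instead re-derive the transformation result from scratch by verifying the four hypotheses of L\'{e}vy's characterization for $M=QB$; this is more self-contained but longer, and both arguments hinge on the same identity $G(Q^{T}AQ)=G(A)$, which you rightly flag as the only place condition (A) (or, more generally, orthogonal invariance of $G$) enters. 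One small inaccuracy: you cite Theorem \ref{Levy} for the claim that $\frac{1}{2}\langle A'B_t,B_t\rangle-G(A')t$ is a martingale when $B$ is a $G$-Brownian motion, but that theorem is stated only in the sufficiency direction (martingale property $\Rightarrow$ $G$-Brownian motion). The converse fact you need is true and standard — it follows from the decomposition $\langle A'B_t,B_t\rangle=\langle A'B_s,B_s\rangle+2\langle A'B_s,B_t-B_s\rangle+\langle A'(B_t-B_s),B_t-B_s\rangle$ together with the symmetry of the cross term and $\hat{\mathbb{E}}_s[\langle A'(B_t-B_s),B_t-B_s\rangle]=2G(A')(t-s)$ — but it deserves its own justification or a correct reference rather than an appeal to the characterization theorem.
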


\begin{proof}
	By Proposition 1.5 in Chapter \Rmnum{3} of \cite{peng2019nonlinear}, we know that $ (X_{t})_{t\geq0} $ is a $ d $-dimensional $ G_{Q} $-Brownian motion starting at $ Qx $, where 
	\begin{align*}
		G_{Q}(A)={}&\frac{1}{2}\hat{\mathbb{E}}[\langle AQB_{1},QB_{1} \rangle]\\
		={}&\frac{1}{2}\hat{\mathbb{E}}[\langle Q^{T}AQB_{1},B_{1} \rangle]\\
		={}&G(Q^{T}AQ)\\
		={}&G(A),\quad A\in\mathbb{S}_{d}.
	\end{align*}
    This completes the proof.
\end{proof}

\begin{remark}
	Let $ B^{y} $ be the  $ d $-dimensional $ G $-Brownian motion starting at $ y $ and let $ B^{x} $ be the  $ d $-dimensional $ G $-Brownian motion starting at $ x $. If $ |y|=|x| $, then there exists an orthogonal matrix $ Q $ such that $ y=Qx $. In the above lemma, we obtain that $ QB^{x} $ is a $ G $-Brownian motion starting at $ y $. Since $ |QB^{x}|=|B^{x}| $, it follows that 
	\[ |B^{y}|\overset{d}{=}|B^{x}| . \]
	Therefore, the law of $ |B^{x}| $ depends on $ |x| $. 
\end{remark}

We now introduce the definition of $ {G} $-Bessel processes.

\begin{definition}[$ G $-Bessel processes]\label{def3.4}
	Let $ G $ be a function satisfying the condition (A). For $ x\in\mathbb{R}^{d} $,  let $ B $ be a $ d $-dimensional $ G $-Brownian motion on the $ G $-expectation space $(\Omega ,L_{G}^{1}(\Omega),\hat{\mathbb{E}})  $ and let $ B^{x}=(\bar{B}^{(1)},\cdots,\bar{B}^{(d)})=B+x $ be a $ d $-dimensional $ G $-Brownian motion starting at $ x $.
	\[R_{t}:=|B_{t}^{x}|=\sqrt{(\bar{B}_{t}^{(1)})^{2}+\cdots+(\bar{B}_{t}^{(d)})^{2}}, \quad 0\leq t<\infty , \]
	is called a $ G $-Bessel process with dimensional $ d $ starting at $ r=|x| $ and denote by $ G $-$\mathrm{BES} _{\mathrm{r}}^{\mathrm{d}} $.
\end{definition}	

\begin{corollary}
	The $ G $-Bessel process has the Brownian scaling property, i.e., if $ (R_{t})_{t\geq0} $ is a $ G $-$\mathrm{BES} _{\mathrm{r}}^{\mathrm{d}} $, then the $ (\lambda^{-1}R_{\lambda^{2} t})_{t\geq0} $ is a $ G $-$\mathrm{BES} _{\mathrm{\lambda^{-1}r}}^{\mathrm{d}} $ for any $ \lambda>0 $.
\end{corollary}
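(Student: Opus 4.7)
The plan is to reduce the scaling property of $G$-Bessel processes to the analogous scaling property of $G$-Brownian motion, then recombine via Definition \ref{def3.4}. Write $R_t=|B_t^x|$ with $|x|=r$, so that
$$\lambda^{-1}R_{\lambda^{2}t}=\lambda^{-1}|B_{\lambda^{2}t}+x|=|\lambda^{-1}B_{\lambda^{2}t}+\lambda^{-1}x|.$$
Setting $\tilde{B}_t:=\lambda^{-1}B_{\lambda^{2}t}$ and $y:=\lambda^{-1}x$ (so $|y|=\lambda^{-1}r$), it suffices to prove that $(\tilde{B}_t)_{t\geq 0}$ is itself a $d$-dimensional $G$-Brownian motion for the \emph{same} function $G$; then $(\tilde{B}_t+y)_{t\geq 0}$ is a $d$-dimensional $G$-Brownian motion starting at $y$, and the conclusion follows immediately from the definition of $G$-$\mathrm{BES}_{\lambda^{-1}r}^{d}$.

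The verification for $\tilde{B}$ proceeds by checking the three defining properties. First, $\tilde{B}_0=\lambda^{-1}B_0=0$. Second, for $0\leq t_1\leq\cdots\leq t_n\leq t$ and $s\geq 0$, the increment $\tilde{B}_{t+s}-\tilde{B}_t=\lambda^{-1}(B_{\lambda^{2}(t+s)}-B_{\lambda^{2}t})$ is a deterministic linear function of $B_{\lambda^{2}(t+s)}-B_{\lambda^{2}t}$, while $(\tilde{B}_{t_j})_{j=1}^{n}$ is obtained from $(B_{\lambda^{2}t_j})_{j=1}^{n}$ in the same way; since independence in the $G$-expectation sense is preserved under compositions with deterministic Lipschitz maps, the independence required for $\tilde{B}$ is inherited from that of $B$.

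Third, for the stationary increment property, I use the positive $1$-homogeneity of $G$: if $\xi$ is $G$-normally distributed and $c>0$, then by the uniqueness of the viscosity solution of the $G$-heat equation, $\hat{\mathbb{E}}[\varphi(x+\sqrt{t}\,c\xi)]=\hat{\mathbb{E}}[\varphi(x+\sqrt{c^{2}t}\,\xi)]$, i.e.\ $c\xi\overset{d}{=}\sqrt{c^{2}}\,\xi$. Therefore
$$\tilde{B}_{t+s}-\tilde{B}_{t}=\lambda^{-1}(B_{\lambda^{2}(t+s)}-B_{\lambda^{2}t})\overset{d}{=}\lambda^{-1}\sqrt{\lambda^{2}s}\,\xi=\sqrt{s}\,\xi,$$
with the same $G$-normal $\xi$. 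This completes the verification that $\tilde{B}$ is a $d$-dimensional $G$-Brownian motion, and the corollary follows.

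There is no essential obstacle here beyond bookkeeping; the only technical point worth isolating is that the scaling preserves the \emph{same} $G$ (rather than producing some conjugate $G_\lambda$), which is exactly where the positive homogeneity of $G$ (equivalently, of the $G$-heat equation) enters. Once that is recorded, the remainder of the argument is a direct application of the definitions.
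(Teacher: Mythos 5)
Your proof is correct and takes essentially the same route as the paper: both reduce the statement to the fact that $(\lambda^{-1}B_{\lambda^{2}t})_{t\geq0}$ is again a $d$-dimensional $G$-Brownian motion with the \emph{same} $G$, and then conclude from the definition of $G$-$\mathrm{BES}_{\lambda^{-1}r}^{d}$. The only difference is that the paper simply cites Remark 1.4 in Chapter \Rmnum{3} of \cite{peng2019nonlinear} for this scaling invariance, whereas you verify it directly from the three defining properties, correctly isolating the positive homogeneity of $G$ as the reason the generator is preserved.
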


\begin{proof} 
	Changing the variable of integration, we get
	\[\lambda^{-1}R_{\lambda^{2} t}=\sqrt{(\lambda^{-1}\bar{B}_{\lambda^{2}t}^{(1)})^{2}+\cdots+(\lambda^{-1}\bar{B}_{\lambda^{2}t}^{(d)})^{2}}, \quad 0\leq t<\infty . \]
	By Remark 1.4 in Chapter \Rmnum{3} of \cite{peng2019nonlinear}, $ (\lambda^{-1}B_{\lambda^{2}t}^{x})_{t\geq0} $ is a $ d $-dimensional $ G $-Brownian motion starting at $ \lambda^{-1}x $, then we deduce the result. 
\end{proof}

\section{Some properties of $ G $-Bessel processes}\label{s4}

In this section,we present the main result of $ G $-Bessel processes. Let $ B $ and $ B^{x} $ be defined as in Definition \ref{def3.4}. 

\begin{theorem}\label{thm4.1}
	Let $ d\geq [\frac{\bar{\sigma}^{2}}{\underline{\sigma}^{2}}]+1$ and $ T\geq0 $. Let $( R_{t})_{t\in [0,T]} $ be a $ G $-$\mathrm{BES} _{\mathrm{r}}^{\mathrm{d}} $ for given $ r\geq0 $ . Then $ \frac{1}{R} \in M_{G}^{1}(0,T) $ and $  R $ satisfies the following equation:
	\begin{align}\label{R}
		\begin{cases}
			\mathrm{d}R_{t}=\frac{d-1}{2R_{t}}\mathrm{d}\langle \beta \rangle_{t}+\mathrm{d}\beta_{t},\\
			R_{0}=r,
		\end{cases}
	\end{align}
	where 
	\begin{align}\label{B}
		\beta_{t}:=\sum_{i=1}^{d}\int_{0}^{t}\frac{\bar{B}_{s}^{(i)}}{R_{s}}\mathrm{d}B_{s}^{(i)}, \quad t\in[0,T],
	\end{align}
    is a one-dimensional $ G' $-Brownian motion. Moreover, if $ d\geq([\frac{\bar{\sigma}^{2}}{\underline{\sigma}^{2}}]+1)\vee3 $ and $ r>0 $, then $ R\in\tilde{M}_{G}^{2}(0,T) $ is the unique solution of equation (\ref{R}) and never reaches the origin q.s., that is,
	\[\mathrm{c}(\{R_{t}=0,\quad t\in [0,\infty)\} )=0. \]
\end{theorem}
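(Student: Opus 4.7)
The statement packages four facts: (i) $(1/R_t)\in M_G^1(0,T)$; (ii) $R_t$ satisfies the SDE (\ref{R}) with $\beta$ a one-dimensional $G'$-Brownian motion; (iii) pathwise uniqueness for (\ref{R}); and (iv) nonattainability of the origin when $r>0$. I would treat (i) and (ii) together, then (iii), then (iv); the principal obstacle is the integrability assertion in (i), on which every subsequent step rests.

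For (i) I would regularize the absolute value by $u_\eta(x):=\sqrt{|x|^2+\eta}$, $\eta>0$, which is smooth on $\mathbb{R}^d$. By Lemma~\ref{lem4.1}, condition (A) forces $\mathrm{d}\langle B^{(i)},B^{(j)}\rangle_t=\delta_{ij}\,\mathrm{d}\langle B^{(1)}\rangle_t$, so $G$-It\^o applied to $u_\eta(B^x_t)$ gives
\[
u_\eta(B^x_t)-u_\eta(x)=\sum_{i=1}^d\int_0^t\frac{\bar B^{(i)}_s}{u_\eta(B^x_s)}\,\mathrm{d}B^{(i)}_s+\frac12\int_0^t\frac{(d-1)|B^x_s|^2+d\eta}{u_\eta(B^x_s)^3}\,\mathrm{d}\langle B^{(1)}\rangle_s.
\]
The stochastic integral is a symmetric $G$-martingale, hence has zero expectation under each $P\in\mathcal{P}$. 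Combined with the elementary inequality $(d-1)|x|^2+d\eta\ge(d-1)u_\eta(x)^2$ and the representation $\hat{\mathbb{E}}[\cdot]=\sup_{P\in\mathcal{P}}E_P[\cdot]$ from Theorem~\ref{thm2.1}, this yields an $\eta$-uniform estimate $\sup_{P\in\mathcal{P}}E_P\bigl[\int_0^T u_\eta(B^x_s)^{-1}\,\mathrm{d}s\bigr]\le C$. Classical monotone convergence inside each fixed $P$ then gives $\sup_P E_P\bigl[\int_0^T R_s^{-1}\,\mathrm{d}s\bigr]<\infty$. I expect the dimension hypothesis $d\ge[\bar\sigma^2/\underline\sigma^2]+1$ to enter precisely in passing from this $P$-wise bound to genuine $M_G^1$-membership, via a uniform $L^p$-bound on $1/R_t$ with some $p>1$ that makes the truncations $(1/R_t)\wedge n$ a Cauchy sequence in $M_G^1$; the accounting should use that $(d-1)\underline\sigma^2$ dominates $\bar\sigma^2$ precisely under this condition.

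With (i) established, (ii) follows by sending $\eta\downarrow 0$ in the It\^o identity above. To identify $\beta$ as a one-dimensional $G'$-Brownian motion I invoke the remark after Theorem~\ref{Levy}: $\beta_0=0$, $\beta$ is a symmetric $G$-martingale in $L_G^3$, and condition (A) gives directly
\[
\langle\beta\rangle_t=\sum_{i=1}^d\int_0^t\frac{(\bar B^{(i)}_s)^2}{R_s^2}\,\mathrm{d}\langle B^{(i)}\rangle_s=\int_0^t\frac{|B^x_s|^2}{R_s^2}\,\mathrm{d}\langle B^{(1)}\rangle_s=\langle B^{(1)}\rangle_t.
\]
Hence $\beta_t^2-\bar\sigma^2 t=(\beta_t^2-\langle\beta\rangle_t)+(\langle B^{(1)}\rangle_t-\bar\sigma^2 t)$ is a sum of two known $G$-martingales, and the same decomposition works for $-\beta_t^2+\underline\sigma^2 t$; the third-moment regularity comes from a BDG-type estimate combined with the $M_G^1$-integrability of $1/R$.

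For uniqueness (iii), if $R^1,R^2$ both solve (\ref{R}) for the same $\beta$ with common initial value $r>0$, then $R^1-R^2$ has finite variation with $\mathrm{d}(R^1_t-R^2_t)=\tfrac{d-1}{2}(1/R^1_t-1/R^2_t)\,\mathrm{d}\langle\beta\rangle_t$; applying It\^o to $(R^1-R^2)^2$ produces a drift of sign $(R^1-R^2)(1/R^1-1/R^2)\le 0$, so pathwise $(R^1-R^2)^2\equiv 0$. For nonattainability (iv), I would use the Lyapunov function $u(x):=|x|^{-\alpha}$ with a fixed $\alpha\in(0,d-2)$ (available since $d\ge 3$); condition (A) makes $\mathrm{tr}(D^2u)=\alpha(\alpha+2-d)|x|^{-\alpha-2}\le 0$, hence $G(D^2u)\le 0$, so the stopped process $u(B^x_{\cdot\wedge\tau_\epsilon})$ with $\tau_\epsilon:=\inf\{t:R_t\le\epsilon\}$ is a bounded $G$-supermartingale. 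This gives $\epsilon^{-\alpha}\mathrm{c}(\tau_\epsilon\le t)\le\hat{\mathbb{E}}[u(B^x_{t\wedge\tau_\epsilon})]\le u(x)=r^{-\alpha}$, whence $\mathrm{c}(\tau_\epsilon\le t)\le(\epsilon/r)^\alpha\to 0$; letting $\epsilon\downarrow 0$ and $t\uparrow\infty$ yields $\mathrm{c}(\{R_t=0\text{ for some }t\})=0$. The main technical difficulty is step (i): pinpointing how $d\ge[\bar\sigma^2/\underline\sigma^2]+1$ bites and bootstrapping $P$-wise integrability to an honest $M_G^1$-statement without a $G$-dominated convergence theorem.
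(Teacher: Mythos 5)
Your architecture is right and parts (ii)--(iv) are workable, but the linchpin --- step (i), the membership $(1/R_t)\in M_G^1(0,T)$ --- is left with a genuine gap, and it is exactly the gap the paper's key lemmas are designed to fill. Your It\^o/Lyapunov computation with $u_\eta(x)=\sqrt{|x|^2+\eta}$ gives (after monotone convergence under each fixed $P$) only $\sup_{P\in\mathcal{P}}E_P\bigl[\int_0^T R_s^{-1}\,\mathrm{d}s\bigr]<\infty$. As you yourself note, this does not put $1/R$ in the completion space $M_G^1(0,T)$: one must show the truncations $(1/R_t)\wedge n$ are Cauchy in $\|\cdot\|_{M_G^1}$, i.e.\ $\hat{\mathbb{E}}[\int_0^T R_t^{-1}I_{\{R_t<1/m\}}\,\mathrm{d}t]\to 0$, and a uniform $p=1$ bound cannot be upgraded by itself. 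You defer this to an unproven ``uniform $L^p$-bound with some $p>1$'' and guess that the condition enters as ``$(d-1)\underline{\sigma}^2$ dominates $\bar{\sigma}^2$''; that is not the mechanism. The paper's actual engine is a capacity decay estimate: constructing the explicit Gaussian supersolution $v_n(t,x)=(1+nt)^{-c\rho}\exp(-n|x-a|^2/(2(1+nt)\bar{\sigma}^2))$ of the $G$-heat equation (Lemma~\ref{lem4.4}) yields $\mathrm{c}(\{|B_t-a|<\epsilon\})\le C\epsilon^{2c\rho}t^{-c\rho}$ with $\rho=\underline{\sigma}^2/(2\bar{\sigma}^2)$, and the hypothesis $d\ge[\bar{\sigma}^2/\underline{\sigma}^2]+1$ is precisely $d\rho>\tfrac12$, which allows choosing the exponent $\alpha=c\rho\in(\tfrac12,1)$ so that $\epsilon^{-1}\mathrm{c}(\{|B_t-a|<\epsilon\})$ integrates to something that vanishes as $\epsilon\downarrow0$ (Lemma~\ref{im}). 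This single estimate is what makes the dyadic truncations of $1/R$ Cauchy in $M_G^1$ \emph{and} what kills the correction term in the It\^o approximation (your passage $\eta\downarrow0$ in the drift needs exactly this control near the origin; $P$-wise monotone convergence does not give convergence under $\hat{\mathbb{E}}$). Without it, neither (i) nor the identification of the SDE in (ii) closes.

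On the remaining parts: your identification of $\beta$ via the remark after Theorem~\ref{Levy} matches the paper. Your uniqueness argument --- $(R^1-R^2)$ has finite variation and the drift of $(R^1-R^2)^2$ has sign $(R^1-R^2)(1/R^1-1/R^2)\le 0$ --- is a genuinely different and more elementary route than the paper's localization-plus-Lipschitz argument, but it silently assumes both solutions are strictly positive (otherwise $1/R^i$ and the monotonicity inequality are meaningless), so logically it must come \emph{after} a nonattainability/positivity statement for arbitrary $M_G^2$-solutions, which is why the paper localizes at the stopping times $\tau_n=\inf\{t:X_t\le 1/n\}$ first. Your nonattainability argument with $|x|^{-\alpha}$, $\alpha\in(0,d-2)$, is essentially the paper's (which uses the exactly harmonic exponent $h(x)=x^{1-2m}$ to get a symmetric martingale rather than a supermartingale); note that applying optional sampling at $\tau_\epsilon$ under $\hat{\mathbb{E}}$ requires checking $I_{[0,\tau_\epsilon\wedge T]}\in M_G^2(0,T)$, which the paper obtains from Proposition 4.11 of \cite{liu2020exit}.
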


\begin{remark}
	Let us clarify the space to which the solution belongs. Since $ B $ and $ B^{x}=B+x $ are two $ G $-Brownian motions with different initial values, we point out that the solution of equation (\ref{R}) is considered in the space $ M_{G}^{2}(0,T) $ which is generated by $ d $-dimensional $ G $-Brownian motion $ B $. 
	
	Since $ \beta $ is a one-dimensional $ G' $-Brownian motion, we can similarly define the following spaces:
	\begin{itemize}
		\item $ \tilde{Lip}(\Omega_{t}):=\{\varphi(\beta_{t_{1}},\cdots,\beta_{t_{n}}): n\in\mathbb{N}, 0\leq t_{1}\leq\cdots\leq t_{n}\leq t,\varphi\in C_{b,Lip}(\mathbb{R}^{1\times n})\} $;
		\item $ \tilde{Lip}(\Omega):=\cup_{n=1}^{\infty}Lip(\Omega_{n}) $;
		\item $ \tilde{M}_{G}^{0}(0,T):=\{\eta_{t}(\omega)=\sum_{i=0}^{N-1}\xi_{i}(\omega)I_{[t_{i},t_{i+1})}(t): n\in\mathbb{N},0=t_{0}< t_{1}<\cdots< t_{n}= T, \xi_{i}\in\tilde{Lip}(\Omega_{t_{i}})  \} $.
	\end{itemize}
    Define $ \|X\|_{\tilde{L}_{G}^{p}}=(\hat{\mathbb{E}}[|X|^{p}])^{\frac{1}{p}} $ and $\|\eta\|_{\tilde{M}_{G}^{p}}=(\hat{\mathbb{E}}[\int_{0}^{T}|\eta_{t}|^{p}\mathrm{d}t])^{\frac{1}{p}}  $ for $ p\geq1,\  X\in\tilde{Lip}(\Omega),\ \eta\in\tilde{M}_{G}^{0}(0,T) $. Then we can define $  \tilde{L}_{G}^{p}(\Omega),\ \tilde{M}_{G}^{p}(0,T) $ analogously to $ L_{G}^{p}(\Omega),\ M_{G}^{p}(0,T) $. Since $ \beta $ is generated by $ B^{x} $, we have $ \tilde{L}_{G}^{p}(\Omega)\subset L_{G}^{p}(\Omega),\ \tilde{M}_{G}^{p}(0,T)\subset M_{G}^{p}(0,T)  $. 
	
	If $ d\geq [\frac{\bar{\sigma}^{2}}{\underline{\sigma}^{2}}]+1 $, we can prove that $ R=|B^{x}| $ is a solution which belongs to $ M_{G}^{2}(0,T) $. If $ d\geq([\frac{\bar{\sigma}^{2}}{\underline{\sigma}^{2}}]+1)\vee3 $ and $ r>0 $, then from Lemma \ref{lem4.6}, we know $ R $ is a unique solution of (\ref{R}) which belongs to $ M_{G}^{2}(0,T) $. Moreover, $R\in \tilde{M}_{G}^{2}(0,T) $.
\end{remark}

To prove the above theorem, we need the following lemmas. 

\begin{lemma}
	Let $ G $ be a function satisfying the condition (A) and let $ T\geq0 $. Then $\beta_{t}=\sum_{i=1}^{d} \int_{0}^{t}\frac{\bar{B}_{s}^{(i)}}{R_{s}}\mathrm{d}B_{s}^{(i)} $, $t\in[0,T],$ is a one-dimensional $ G' $-Brownian motion and for $ 0\leq s\leq t\leq T $, $\beta_{t}-\beta_{s}  $ is independent from $ L_{G}^{1}(\Omega_{s}) $.
\end{lemma}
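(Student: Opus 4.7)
The plan is to apply the one-dimensional Lévy martingale characterization (the remark following Theorem \ref{Levy}) to conclude that $\beta$ is a $G'$-Brownian motion on $(\Omega,L_G^1(\Omega),\hat{\mathbb{E}})$, and then read off the independence of $\beta_t-\beta_s$ from $L_G^1(\Omega_s)$ as a built-in property of $G$-Brownian motions on this space.

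First I would establish that $\beta$ is well-defined with each integrand $\bar{B}^{(i)}_s/R_s$ lying in $M_G^2(0,T)$. Because $|\bar{B}^{(i)}_s|\leq R_s$, the ratio is bounded by $1$ on $\{R_s>0\}$, and can be set to zero elsewhere. To secure membership in $M_G^2(0,T)$ I would approximate by $\eta^{\varepsilon}_s:=\bar{B}^{(i)}_s/(R_s\vee\varepsilon)$, which is a bounded Lipschitz function of $B^x_s$ and so belongs to $M_G^2(0,T)$, and then pass to the limit $\varepsilon\downarrow0$ using a dominated-convergence argument in the $M_G^2$ norm. Since the integrands are bounded by $1$, standard BDG-type estimates for $G$-Itô integrals give
\[
\hat{\mathbb{E}}\bigl[|\beta_{t+\varepsilon}-\beta_t|^3\bigr]\leq C\varepsilon^{3/2}=o(\varepsilon),
\]
uniformly for $t\leq T$, and $\beta$ is a symmetric martingale with $\beta_0=0$.

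Next I would identify $\langle\beta\rangle_t$. Using Lemma \ref{lem4.1} under condition (A), we have $d\langle B^{(i)},B^{(j)}\rangle_s=\delta_{ij}\,d\langle B^{(1)}\rangle_s$, hence
\[
\langle\beta\rangle_t=\sum_{i,j=1}^{d}\int_0^t\frac{\bar{B}^{(i)}_s\bar{B}^{(j)}_s}{R_s^2}\,d\langle B^{(i)},B^{(j)}\rangle_s=\int_0^t\frac{\sum_i(\bar{B}^{(i)}_s)^2}{R_s^2}\,d\langle B^{(1)}\rangle_s=\langle B^{(1)}\rangle_t.
\]
Since $B^{(1)}$ is a one-dimensional $G'$-Brownian motion (a consequence of condition (A) restricted to the first coordinate), both $\langle B^{(1)}\rangle_t-\bar\sigma^2 t$ and $\underline\sigma^2 t-\langle B^{(1)}\rangle_t$ are martingales. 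Applying the $G$-Itô formula to $x^2$ yields $\beta_t^2-\langle\beta\rangle_t=2\int_0^t\beta_s\,d\beta_s$, a symmetric martingale; adding and subtracting $\langle B^{(1)}\rangle_t$ then shows that $\beta_t^2-\bar\sigma^2 t$ and $-\beta_t^2+\underline\sigma^2 t$ are both martingales.

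The one-dimensional Lévy criterion therefore concludes that $\beta$ is a one-dimensional $G'$-Brownian motion on $(\Omega,L_G^1(\Omega),\hat{\mathbb{E}})$, and by the very definition of $G$-Brownian motion on this space the increment $\beta_t-\beta_s$ is independent of $L_G^1(\Omega_s)$. The main obstacle I anticipate is the well-definedness step: rigorously justifying that $\bar{B}^{(i)}/R\in M_G^2(0,T)$ despite the singularity at $R=0$, and controlling the $M_G^2$-convergence of the $\varepsilon$-regularization (rather than mere pointwise convergence). Once the integrand is placed in $M_G^2$, the remaining steps reduce to direct verification of the hypotheses of Lévy's characterization.
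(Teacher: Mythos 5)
Your overall route coincides with the paper's: verify the hypotheses of the one-dimensional L\'evy characterization (symmetric martingale, $o(\epsilon)$ third-moment increments, $\langle\beta\rangle_t=\langle B^{(1)}\rangle_t$ via condition (A)) and conclude that $\beta$ is a $G'$-Brownian motion. But the step you yourself flag as the main obstacle --- placing $\bar{B}^{(i)}/R$ in $M_G^2(0,T)$ --- is left genuinely open. You propose to regularize by $\bar{B}^{(i)}_s/(R_s\vee\varepsilon)$ and pass to the limit ``using a dominated-convergence argument in the $M_G^2$ norm,'' but dominated convergence is exactly one of the tools that is unavailable under $G$-expectation (the introduction of the paper points this out). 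Making your limit rigorous would require a quantitative capacity bound such as $\hat{\mathbb{E}}[\int_0^T I_{\{R_s<\varepsilon\}}\,\mathrm{d}s]\rightarrow 0$, which rests on the nontrivial $G$-heat-equation estimate of Lemma \ref{lem4.4}; you neither prove nor cite such a bound. The paper sidesteps the regularization entirely: it notes that $\varphi(x)=x_i|x|^{-1}I_{\{x_i\neq0\}}$ is a bounded Borel function and invokes Theorem 4.16 of \cite{hu2016quasi} to put $\varphi(B^x_\cdot)$ directly into $M_G^2(0,T)$.

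The second gap is the independence assertion. You claim that once $\beta$ is a $G'$-Brownian motion, the independence of $\beta_t-\beta_s$ from $L_G^1(\Omega_s)$ is ``built in.'' It is not: the definition of $G$-Brownian motion only gives independence of the increment from the past of $\beta$ itself, whereas $L_G^1(\Omega_s)$ is generated by the full $d$-dimensional motion $B$ up to time $s$ and contains random variables (for instance $B^{(1)}_s$) that are not functions of $(\beta_u)_{u\le s}$. This stronger independence is precisely the additional content of the lemma; the paper derives it from the identity $\hat{\mathbb{E}}_s[\varphi(\beta_t-\beta_s)]=\hat{\mathbb{E}}[\varphi(\beta_t-\beta_s)]$ --- extracted from the proof, not merely the statement, of Theorem 4.1 in \cite{hu2019levy} --- followed by a tower-property computation. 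Without a substitute for that conditional-expectation identity, your argument does not yield the second conclusion of the lemma.
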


\begin{proof} 
	Let us define $ \beta_{t}^{(i)}=\int_{0}^{t}\frac{\bar{B}_{s}^{(i)}}{R_{s}}\mathrm{d}B_{s}^{(i)}$ for $ 1\leq i\leq d $. Set $ \varphi(x)=\frac{x_{i}}{|x|}I_{\{x_{i}\neq0\}} $ where $ x=(x_{1},\cdots,x_{d})\in\mathbb{R}^{d} $. Then $ \varphi $ is a bounded Borel measurable function and $ \varphi(B_{s}^{x})=\frac{ \bar{B}_{s}^{(i)}}{R_{s}}I_{\{\bar{B}_{s}^{(i)}\neq0\}} $. From Theorem 4.16 of \cite{hu2016quasi}, we obtain $ ( \varphi(B_{s}^{x}))_{s\in[0,T]}\in M_{G}^{2}(0,T) $. Consequently, $(\frac{ \bar{B}_{s}^{(i)}}{R_{s}})_{s\in[0,T]} \in M_{G}^{2}(0,T) $. By applying BDG's inequality (Lemma 1.12 in Chapter \Rmnum{8} of \cite{peng2019nonlinear}), we obtain
	\begin{align*}
		\hat{\mathbb{E}}[|\beta_{t}^{(i)}|^{4}]\leq{}&C\bar{\sigma}^{2}\hat{\mathbb{E}}\left[\left( \int_{0}^{t}(\frac{\bar{B}_{s}^{(i)}}{R_{s}})^{2}\mathrm{d}s\right) ^{2} \right]\\
		\leq{}& C\bar{\sigma}^{2}t^{2}.
	\end{align*}
	From Theorem 54 of \cite{denis2011function}, it follows that $ \beta_{t}^{(i)} \in L_{G}^{3}(\Omega_{t}) $. Hence, $ \beta_{t}\in L_{G}^{3}(\Omega_{t}) $. Since $\hat{\mathbb{E}}[\beta_{t}]=\hat{\mathbb{E}}[-\beta_{t}]=0$, $ \beta $ is a symmetric martingale. Using H\"{o}lder's inequality and BDG's inequality, we yield 
	\begin{align*}
		\hat{\mathbb{E}}[|\beta_{t+\epsilon}-\beta_{t}|^{3}]={}&\hat{\mathbb{E}}\left[\left| \sum_{i=1}^{d} \int_{t}^{t+\epsilon}\frac{\bar{B}_{s}^{(i)}}{R_{s}}\mathrm{d}B_{s}^{(i)} \right|^{3} \right]\\
		\leq{}&d^{2}\sum_{i=1}^{d}\hat{\mathbb{E}}\left[\left|  \int_{t}^{t+\epsilon}\frac{\bar{B}_{s}^{(i)}}{R_{s}}\mathrm{d}B_{s}^{(i)} \right|^{3} \right] \\
		\leq{}&Cd^{2}\bar{\sigma}^{2}\sum_{i=1}^{d}\hat{\mathbb{E}}\left[\left( \int_{t}^{t+\epsilon}(\frac{\bar{B}_{s}^{(i)}}{R_{s}})^{2}\mathrm{d}s\right) ^{\frac{3}{2}} \right]\\
		\leq{}&Cd^{3}\bar{\sigma}^{2}\epsilon^{\frac{3}{2}}.
	\end{align*}
	From lemma \ref{lem4.1}, we have
	\[ \langle \beta\rangle_{t}=\sum_{i=1}^{d}\int_{0}^{t}(\frac{\bar{B}_{s}^{(i)}}{R_{s}})^{2}\mathrm{d}\langle B^{(i)}\rangle_{s}=\langle B^{(1)}\rangle_{t}, \quad t\geq0. \]
	Notice that $ \beta $ is a symmetric martingale. Then we obtain $ \hat{\mathbb{E}}_{s}[\beta_{s}(\beta_{t}-\beta_{s})]=\hat{\mathbb{E}}_{s}[-\beta_{s}(\beta_{t}-\beta_{s})]=0 $. Using Proposition 3.4 of \cite{hu2019levy}, it follows that
	\begin{align*}
		\hat{\mathbb{E}}_{s}[\beta_{t}^{2}-\bar{\sigma}^{2}t]={}&\hat{\mathbb{E}}_{s}[(\beta_{t}-\beta_{s}+\beta_{s})^{2}]-\bar{\sigma}^{2}t\\
		={}&\hat{\mathbb{E}}_{s}[(\beta_{t}-\beta_{s})^{2}+2\beta_{s}(\beta_{t}-\beta_{s})+\beta_{s}^{2}]-\bar{\sigma}^{2}t\\
		={}&\hat{\mathbb{E}}_{s}[(\beta_{t}-\beta_{s})^{2}]+\beta_{s}^{2}-\bar{\sigma}^{2}t\\
		={}&\hat{\mathbb{E}}_{s}[\langle B^{(1)}\rangle_{t}-\langle B^{(1)}\rangle_{s}]+\beta_{s}^{2}-\bar{\sigma}^{2}t\\
		={}&\beta_{s}^{2}-\bar{\sigma}^{2}s.
	\end{align*}
	Therefore, $ \{\beta_{t}^{2}-\bar{\sigma}^{2}t\}_{t\geq0} $ is a martingale. Similarly, $ \{-\beta_{t}^{2}+\underline{\sigma}^{2}t\}_{t\geq0} $ is also a martingale. By Theorem \ref{Levy}, $ \beta $ is a one-dimensional $ G' $-Brownian motion. Now  for each $ \varphi\in C_{b,Lip}(\mathbb{R}) $, from the proof of Theorem 4.1 of \cite{hu2019levy}, we know
	\[\hat{\mathbb{E}}_{s}[\varphi(\beta_{t}-\beta_{s})]=\hat{\mathbb{E}}[\varphi(\beta_{t}-\beta_{s})].\]
	Thus for each $ \xi\in (L_{G}^{1}(\Omega_{s}))^{d} $ and $ \phi\in C_{b,Lip}(\mathbb{R}^{d+1}) $, by applying Proposition 3.5 of \cite{hu2019levy}, we have
	\begin{align*}
		\hat{\mathbb{E}}[\phi(\xi,\beta_{t}-\beta_{s})]={}&\hat{\mathbb{E}}[\hat{\mathbb{E}}_{s}[\phi(\xi,\beta_{t}-\beta_{s})]]\\
		={}&\hat{\mathbb{E}}[\hat{\mathbb{E}}_{s}[\phi(c,\beta_{t}-\beta_{s})]_{c=\xi}]\\
		={}&\hat{\mathbb{E}}[\hat{\mathbb{E}}[\phi(c,\beta_{t}-\beta_{s})]_{c=\xi}].
	\end{align*}
	The proof is complete.
\end{proof}

Now we give an estimate of the solution to $ G $-heat equation, which will be needed in what follows.

\begin{lemma}\label{lem4.4}
	Let $ G $ be a function satisfying the condition (A). Suppose $ u_{n} $ is the solution of the following PDE defined on $ [0,\infty)\times\mathbb{R}^{d} $,
	\begin{equation}\label{equn}
		\begin{cases}
			\partial_{t}u_{n}(t,x)-G(D^{2}_{x}u_{n}(t,x))=0,\\
			u_{n}(0,x)=\exp\left(-\frac{n| x-a|^{2}}{2\bar{\sigma}^{2}}\right),
		\end{cases}
	\end{equation} 
	where $ n>0 $ and $ a\in\mathbb{R}^{d} $. Then for each fixed $ c\in[0,d] $, we obtain
	\[ u_{n}(t,x)\leq(1+nt)^{-c\rho}, \]
	where $ \rho=\frac{\underline{\sigma}^{2}}{2\bar{\sigma}^{2}} $.
\end{lemma}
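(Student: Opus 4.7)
The plan is to construct an explicit smooth classical supersolution of the $G$-heat equation \eqref{equn} with the same initial data as $u_n$, and then invoke the comparison principle for viscosity solutions.

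Concretely, I propose the ansatz
\[
v(t,x):=(1+nt)^{-c\rho}\exp\!\left(-\frac{n|x-a|^{2}}{2\bar{\sigma}^{2}(1+nt)}\right),
\]
which is a slight sharpening of the classical heat kernel convolved with the initial Gaussian. Observe that $v(0,x)=\exp(-n|x-a|^{2}/(2\bar{\sigma}^{2}))=u_{n}(0,x)$. Since $v$ is radially symmetric in $x$ about $a$ and condition (A) gives $G(D^{2}v)=G'(\mathrm{tr}(D^{2}v))=G'(\Delta v)$, the inequality $\partial_{t}v-G(D^{2}v)\geq 0$ reduces, after direct differentiation, to checking two cases according to the sign of $\Delta v$. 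With $\lambda=1+nt$ and $r=|x-a|$, one computes
\[
\Delta v=v\left(-\frac{nd}{\bar{\sigma}^{2}\lambda}+\frac{n^{2}r^{2}}{\bar{\sigma}^{4}\lambda^{2}}\right),\qquad \partial_{t}v=v\left(-\frac{c\rho n}{\lambda}+\frac{n^{2}r^{2}}{2\bar{\sigma}^{2}\lambda^{2}}\right).
\]
When $\Delta v\geq 0$, substituting $G'(\Delta v)=\tfrac{1}{2}\bar{\sigma}^{2}\Delta v$ and dividing by $v$ reduces the required inequality to $c\rho\leq d/2$; when $\Delta v\leq 0$, substituting $G'(\Delta v)=\tfrac{1}{2}\underline{\sigma}^{2}\Delta v$ and using $\rho=\underline{\sigma}^{2}/(2\bar{\sigma}^{2})$ reduces it to
\[
-c\rho+d\rho+\frac{nr^{2}}{2\bar{\sigma}^{2}\lambda}\left(1-\frac{\underline{\sigma}^{2}}{\bar{\sigma}^{2}}\right)\geq 0,
\]
whose last term is non-negative, so it suffices that $c\rho\leq d\rho$. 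Both constraints are implied by $c\in[0,d]$ (the first because $\rho\leq 1/2$, the second trivially), so $v$ is a classical, hence viscosity, supersolution of \eqref{equn}.

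Next, I would apply the standard comparison principle for the $G$-heat equation (see the viscosity theory in Peng \cite{peng2019nonlinear}, e.g.\ the domination theorem for \eqref{equn}): since $v$ and $u_{n}$ have identical, bounded, continuous initial data and $v$ is a supersolution while $u_{n}$ is a solution, we conclude $u_{n}(t,x)\leq v(t,x)$ on $[0,\infty)\times\mathbb{R}^{d}$. Finally, bounding the Gaussian factor by $1$ gives
\[
u_{n}(t,x)\leq(1+nt)^{-c\rho}\exp\!\left(-\frac{n|x-a|^{2}}{2\bar{\sigma}^{2}(1+nt)}\right)\leq(1+nt)^{-c\rho},
\]
as claimed.

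The main obstacle I anticipate is bookkeeping rather than conceptual: carefully carrying the case split on $\mathrm{sgn}(\Delta v)$ through the $G'$ nonlinearity to confirm that $c\leq d$ is indeed the sharp range, and citing the correct form of comparison for solutions of \eqref{equn} whose initial data are bounded but not compactly supported. The hypothesis $d\geq[\bar{\sigma}^{2}/\underline{\sigma}^{2}]+1$ is not actually invoked in the supersolution inequality; it is the global standing assumption of Theorem \ref{thm4.1} and is not needed inside this lemma's proof.
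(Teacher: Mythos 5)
Your proposal is correct and follows essentially the same route as the paper: the identical supersolution ansatz $v_n(t,x)=(1+nt)^{-c\rho}\exp\bigl(-\tfrac{n|x-a|^{2}}{2\bar{\sigma}^{2}(1+nt)}\bigr)$ combined with the comparison principle for the $G$-heat equation. The only (immaterial) difference is that you verify $\partial_{t}v_n-G'(\Delta v_n)\geq 0$ by a case split on the sign of $\Delta v_n$, whereas the paper uses sub-additivity of $G'$ to separate the negative diagonal term from the positive gradient term; your side observation that the hypothesis $d\geq[\bar{\sigma}^{2}/\underline{\sigma}^{2}]+1$ is not used in this lemma (it only matters later, in choosing $c$ with $c\rho>\tfrac12$) is also accurate.
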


\begin{proof} 
	Consider $ v_{n}(t,x)=(1+nt)^{-c\rho}\exp\left(-\frac{n| x-a|^{2}}{2(1+nt)\bar{\sigma}^{2}}\right) $. It is simple to obtain that
	\begin{align*}
		&\partial_{t}v_{n}(t,x)=-\frac{nc\rho}{1+nt}v_{n}(t,x)+\frac{n^{2}| x-a|^{2}}{2\bar{\sigma}^{2}(1+nt)^{2}}v_{n}(t,x),\\
		&\partial_{x_{i}}v_{n}(t,x)=-\frac{n(x_{i}-a_{i})}{\bar{\sigma}^{2}(1+nt)}v_{n}(t,x),\quad i=1,\cdots,d,\\
		&\partial^{2}_{x_{i}x_{i}}v_{n}(t,x)=-\frac{n}{\bar{\sigma}^{2}(1+nt)}v_{n}(t,x)+\frac{n^{2}| x_{i}-a_{i}|^{2}}{(\bar{\sigma}^{2}(1+nt))^{2}}v_{n}(t,x).
	\end{align*}
	Therefore, 
	\begin{align*}
		\partial_{t}v_{n}(t,x)-G(D^{2}_{x}v_{n}(t,x))={}&\partial_{t}v_{n}(t,x)-G'\left(\sum_{i=1}^{d} \partial_{x_{i}x_{i}}^{2}v_{n}(t,x)\right)\\
		={}&-\frac{nc\rho}{1+nt}v_{n}(t,x)+\frac{n^{2}| x-a|^{2}}{2\bar{\sigma}^{2}(1+nt)^{2}}v_{n}(t,x)\\
		&-G'\left(\sum_{i=1}^{d}\left(-\frac{n}{\bar{\sigma}^{2}(1+nt)}v_{n}(t,x)+\frac{n^{2}| x_{i}-a_{i}|^{2}}{(\bar{\sigma}^{2}(1+nt))^{2}}v_{n}(t,x)\right)\right)\\
		\geq{}&-\frac{nc\rho}{1+nt}v_{n}(t,x)+\frac{n^{2}| x-a|^{2}}{2\bar{\sigma}^{2}(1+nt)^{2}}v_{n}(t,x)\\
		&-G'\left(-\frac{dn}{\bar{\sigma}^{2}(1+nt)}v_{n}(t,x)\right)-G'\left(\frac{n^{2}| x-a|^{2}}{(\bar{\sigma}^{2}(1+nt))^{2}}v_{n}(t,x)\right)\\
		={}&\frac{n\rho}{1+nt}v_{n}(t,x)(d-c)\\
		\geq{}& 0,
	\end{align*}
	which implies that $ v_{n} $ is a bounded supersolution of (\ref{equn}). According to the comparison theorem, we have
	\[ u_{n}(t,x)\leq v_{n}(t,x)\leq (1+nt)^{-c\rho}. \]
	The proof is complete.
\end{proof}

The following Lemma plays a crucial role in our approach.

\begin{lemma}\label{im}
	Let $ d\geq [\frac{\bar{\sigma}^{2}}{\underline{\sigma}^{2}}]+1 $ and let $ G $ be a function satisfying the condition (A). Then for any	  $ T,\ \epsilon>0$, and $a\in\mathbb{R}^{d} $, we have 
	\[\lim_{\epsilon\downarrow 0}\hat{\mathbb{E}}\left[\int_{0}^{T}\frac{1}{\epsilon}I_{\{| B_{t}-a|<\epsilon\}}\mathrm{d}t \right]=0. \]
\end{lemma}

\begin{proof} 
	Set $ n>0 $, note that 
	\[\hat{\mathbb{E}}\left[I_{\{| B_{t}-a|<\epsilon\}} \right]\leq\exp(\frac{n\epsilon^{2}}{2\bar{\sigma}^{2}})\hat{\mathbb{E}}\left[\exp(-\frac{n| B_{t}-a|^{2}}{2\bar{\sigma}^{2}}) \right].  \]
	Since 
	\[  \mathrm{c}(\{| B_{t}-a|<\epsilon\})=\hat{\mathbb{E}}\left[I_{\{| B_{t}-a|<\epsilon\}} \right]. \]
	According to Lemma \ref{lem4.4}, for each fixed $ c\in[0,d] $, we get
	\[\mathrm{c}(\{| B_{t}-a|<\epsilon\})\leq\exp(\frac{n\epsilon^{2}}{2\bar{\sigma}^{2}})(1+nt)^{-c\rho},  \]
	where $ \rho=\frac{\underline{\sigma}^{2}}{2\bar{\sigma}^{2}} $. Choosing $ n=\frac{1}{\epsilon^{2}} $, it follows that 
	\[\mathrm{c}(\{| B_{t}-a|<\epsilon\})\leq\exp(\frac{1}{2\bar{\sigma}^{2}})\frac{\epsilon^{2c\rho}}{t^{c\rho}}.\]
	Note that $ d\rho>\frac{1}{2} $. Then we can choose $ c $ such that $ \alpha=c\rho\in(\frac{1}{2},1) $. It follows that
	\begin{align*}
		\hat{\mathbb{E}}\left[\int_{0}^{T}\frac{1}{\epsilon}I_{\{| B_{t}-a|<\epsilon\}}\mathrm{d}t \right]\leq{}&\int_{0}^{T}\frac{1}{\epsilon}\mathrm{c}(\{| B_{t}-a|<\epsilon\})\mathrm{d}t\\
		\leq{}&\frac{1}{1-\alpha}\exp(\frac{1}{2\bar{\sigma}^{2}})\epsilon^{2\alpha-1}T^{1-\alpha}\rightarrow0 \quad \text{as } \epsilon\downarrow 0.
	\end{align*}
	The proof is complete.
\end{proof}

\begin{remark}
	Let $ B^{x}=B+x $ be a $ d $-dimensional $ G $-Brownian motion starting at $ x $ where $ x\in\mathbb{R}^{d} $. If we choose $ a=-x $ in above lemma, then we obtain $ \lim_{\epsilon\downarrow 0}\hat{\mathbb{E}}\left[\int_{0}^{T}\frac{1}{\epsilon}I_{\{| B_{t}^{x}|<\epsilon\}}\mathrm{d}t \right]=0 $.
\end{remark}

We now give the existence and uniqueness of the solution to the stochastic differential equation with non-Lipschitz coefficient driven by the $ G' $-Brownian motion.

\begin{lemma}\label{lem4.6}
	Let $ G $ be a function satisfying the condition (A) and let $ \beta $ be defined as in Theorem \ref{thm4.1}. Then for each $ r,T>0 $ and $ m>\frac{1}{2} $, the following equation
	\begin{align}\label{XSDE}
		X_{t}=r+\beta_{t}+m\int_{0}^{t}\frac{1}{X_{s}}\mathrm{d}\langle \beta\rangle_{s}, \quad t\in [0,T],
	\end{align}
	has a solution $ X\in \tilde{M}_{G}^{2}(0,T) $ and $ X>0 $ q.s. Moreover, equation (\ref{XSDE}) has a unique solution $ X\in M_{G}^{2}(0,T) $.
\end{lemma}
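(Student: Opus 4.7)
The plan is to build the solution by a truncation scheme and to obtain pathwise uniqueness by a direct computation exploiting the monotonicity of $x \mapsto m/x$. For each integer $n \geq 1$, set $b_{n}(x) := m/(x \vee n^{-1})$, which is bounded and Lipschitz. The $G'$-analogue of Theorem \ref{thmsde} produces a unique solution $X^{n} \in \tilde{M}_{G}^{2}(0,T)$ to the truncated SDE
\begin{equation*}
X^{n}_{t} = r + \beta_{t} + \int_{0}^{t} b_{n}(X^{n}_{s})\, d\langle\beta\rangle_{s}.
\end{equation*}
Let $\tau_{n} := \inf\{t \in [0,T] : X^{n}_{t} \leq n^{-1}\}$, with $\inf\emptyset = T$. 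Pathwise uniqueness of the truncated equations forces $(\tau_{n})$ to be non-decreasing and $X^{n}_{t} = X^{n+k}_{t}$ on $\{t \leq \tau_{n}\}$ for every $k \geq 0$, so I may consistently define $X_{t} := X^{n}_{t}$ on $\{t < \tau_{n}\}$. It then suffices to show $\tau_{n} \uparrow T$ q.s.

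\textbf{Non-attainability via a Lyapunov function.} The hypothesis $m > 1/2$ allows me to pick $\alpha := 2m - 1 > 0$, for which $\alpha(\alpha+1)/2 - \alpha m = 0$. I take a bounded $C^{2}(\mathbb{R})$ function $\widetilde{\phi}$ that agrees with $x \mapsto x^{-\alpha}$ on $[n^{-1}, \infty)$. Applying the $G'$-It\^{o} formula to $\widetilde{\phi}(X^{n}_{t \wedge \tau_{n}})$, the drift against $d\langle\beta\rangle_{s}$ equals $[\alpha(\alpha+1)/2 - \alpha m](X^{n}_{s})^{-\alpha-2} \leq 0$ on $\{s < \tau_{n}\}$; since $d\langle\beta\rangle_{s}$ is a non-negative random measure, the stopped process is a bounded $G'$-supermartingale. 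Taking $\hat{\mathbb{E}}$ and restricting to $\{\tau_{n} \leq T\}$, where $X^{n}_{\tau_{n}} = n^{-1}$, yields
\begin{equation*}
n^{\alpha}\, \mathrm{c}\bigl(\{\tau_{n} \leq T\}\bigr) \leq \hat{\mathbb{E}}\bigl[\widetilde{\phi}(X^{n}_{T \wedge \tau_{n}})\bigr] \leq r^{-\alpha}.
\end{equation*}
Since $\tau_{n}$ is non-decreasing, $\{\lim_{n} \tau_{n} \leq T\} = \bigcap_{n} \{\tau_{n} \leq T\}$ has capacity $\leq \inf_{n} (r^{-\alpha} n^{-\alpha}) = 0$, so $\tau_{n} \uparrow T$ q.s. and $X$ is well-defined and strictly positive q.s. on $[0,T]$. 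Applying It\^{o} to $(X^{n}_{\cdot \wedge \tau_{n}})^{2}$ and using that the stochastic integral in $d\beta$ is a symmetric $G'$-martingale with zero $\hat{\mathbb{E}}$ gives the uniform bound $\hat{\mathbb{E}}[(X^{n}_{t \wedge \tau_{n}})^{2}] \leq r^{2} + (2m+1)\bar{\sigma}^{2}T$, which via the Fubini-type inequality $\hat{\mathbb{E}}[\int_{0}^{T}\cdot\, dt] \leq \int_{0}^{T}\hat{\mathbb{E}}[\cdot]\, dt$ places $X \in \tilde{M}_{G}^{2}(0,T)$. Passing to the limit in the truncated equation on $\{t < \tau_{n}\}$ confirms that $X$ solves (\ref{XSDE}).

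\textbf{Uniqueness, and where the difficulty sits.} Suppose $X, \widetilde{X} \in M_{G}^{2}(0,T)$ both solve (\ref{XSDE}); both are necessarily strictly positive q.s., since otherwise the drift $1/X_{s}$ is undefined. The $d\beta$ parts cancel in the difference $D_{t} := X_{t} - \widetilde{X}_{t}$, so
\begin{equation*}
D_{t} = -m \int_{0}^{t} \frac{D_{s}}{X_{s} \widetilde{X}_{s}}\, d\langle\beta\rangle_{s}.
\end{equation*}
Localising at $\sigma_{\epsilon} := \inf\{t : X_{t} \wedge \widetilde{X}_{t} \leq \epsilon\}$ bounds the kernel by $\epsilon^{-2}$, so Gr\"{o}nwall's inequality applied pathwise (with constant $m\bar{\sigma}^{2}/\epsilon^{2}$) forces $D_{t \wedge \sigma_{\epsilon}} = 0$ q.s.; letting $\epsilon \downarrow 0$ with $\sigma_{\epsilon} \uparrow T$ gives $X = \widetilde{X}$. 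The main obstacle of the whole argument is the Lyapunov step: one must justify that a process whose drift is pointwise non-positive against the non-negative random measure $d\langle\beta\rangle_{s}$ remains a $G'$-supermartingale despite the sublinearity of $\hat{\mathbb{E}}$. This works precisely because $d\langle\beta\rangle_{s} \geq 0$ keeps the sign intact under the sublinear sup hidden in $G'$, and the threshold $m > 1/2$ is exactly what produces a strictly positive exponent $\alpha = 2m-1$, matching the classical Bessel dimension condition $d = 2m + 1 > 2$.
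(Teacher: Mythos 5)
Your construction follows essentially the same route as the paper: the same Lipschitz truncation of the drift, the same stopping times $\tau_{n}$ with consistency of the $X^{n}$ proved via (local) Gronwall, the same Lyapunov function (your $x^{-\alpha}$ with $\alpha=2m-1$ is exactly the paper's $h(x)=x^{1-2m}$, for which the $\mathrm{d}\langle\beta\rangle$-drift vanishes identically, so the stopped process is in fact a symmetric $G$-martingale, not merely a supermartingale), the same capacity estimate $\mathrm{c}(\{\tau_{n}\leq T\})\lesssim n^{1-2m}\to 0$, and a localized Gronwall argument for uniqueness that is equivalent to the paper's appeal to uniqueness of the truncated Lipschitz SDE.

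There is, however, one genuine gap: the claim that the uniform second-moment bound ``places $X\in\tilde{M}_{G}^{2}(0,T)$.'' Under $G$-expectation, $M_{G}^{2}(0,T)$ and $\tilde{M}_{G}^{2}(0,T)$ are \emph{completions} of spaces of simple processes under the $M_{G}^{2}$-norm; a quasi-surely defined process with $\hat{\mathbb{E}}[\int_{0}^{T}|X_{t}|^{2}\mathrm{d}t]<\infty$ need not belong to them, and there is no dominated convergence theorem to pass from pointwise convergence of $X^{n}$ to norm convergence. The paper closes this by first showing $I_{[0,\tau_{n}\wedge T]}\in\tilde{M}_{G}^{2}(0,T)$ (via the semimartingale representation of $X^{n}$ under each $P\in\mathcal{P}$ and Proposition 4.11 of Liu (2020)), setting $\tilde{X}^{n}_{t}=X^{n}_{t}I_{[0,\tau_{n}\wedge T]}(t)$, and then proving $\|X-\tilde{X}^{n}\|_{M_{G}^{2}}\to 0$; this last step needs a \emph{fourth}-moment bound on $X^{n}$ (obtained from It\^{o} applied to $(X^{n})^{4}$), Fatou's lemma under each $P$, and the Cauchy--Schwarz estimate $\hat{\mathbb{E}}[X_{t}^{2}I_{\{\tau_{n}\leq T\}}]\leq(\hat{\mathbb{E}}[X_{t}^{4}])^{1/2}(\mathrm{c}(\{\tau_{n}\leq T\}))^{1/2}$ combined with the capacity decay. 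Your second-moment bound alone does not control $\hat{\mathbb{E}}[X_{t}^{2}I_{\{\tau_{n}\leq T\}}]$, so the approximation argument as written does not close. The same regularity issue is why the paper must verify that the indicator $I_{[0,\tau_{n}\wedge T]}$ itself lies in the space before it can even define the approximants; you should supply both of these steps.
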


\begin{proof} 
	For any integer $ n\geq1 $ and $ x\in\mathbb{R} $, define
	\[f_{n}(x):=\frac{1}{|x|}\wedge n.\]
	It is easy to check that $ f_{n}\in C_{b,Lip}(\mathbb{R}) $. Then we consider the following equation: 
	\begin{align}\label{xsden}
		X_{t}^{n}=r+\beta_{t}+m\int_{0}^{t}f_{n}(X_{s}^{n})\mathrm{d}\langle \beta\rangle_{s},  \quad t\in[0,T].
	\end{align}
	From Theorem \ref{thmsde}, it follows that equation (\ref{xsden}) has a unique solution $ X^{n}\in \tilde{M}_{G}^{2}(0,T) $. 
	
	Now we define the stopping time 
	\[\tau_{n}:=\inf\{t\geq0:X_{t}^{n}\leq\frac{1}{n} \}.\] 
	Similarly, we can define $ \tau_{n+1} $ with respect to $ X^{n+1} $. For $ t\leq \tau_{n}\wedge\tau_{n+1} $, 
	\begin{align}
		X_{t}^{n+1}-X_{t}^{n}={}&m\int_{0}^{t}(f_{n+1}(X_{s}^{n+1})-f_{n}(X_{s}^{n}))\mathrm{d}\langle \beta\rangle_{s}\notag\\
		={}&m\int_{0}^{t}(\frac{1}{X_{s}^{n+1}}-\frac{1}{X_{s}^{n}})\mathrm{d}\langle \beta\rangle_{s}.\notag
	\end{align}
	Then from the definition of $ \tau_{n} $, we get 
	\begin{align}
		|X_{t}^{n+1}-X_{t}^{n}|\leq{}&n(n+1)m\bar{\sigma}^{2}\int_{0}^{t}|X_{s}^{n+1}-X_{s}^{n}|\mathrm{d}s.\notag
	\end{align}
	Therefore, by Gronwall’s inequality, $ |X_{t}^{n+1}-X_{t}^{n}|=0 $ which implies $ X_{t}^{n+1}=X_{t}^{n} $ for every $ t\in[0,\tau_{n}\wedge\tau_{n+1}] $ q.s. Thus we infer that $ \tau_{n}\leq\tau_{n+1} $. 
	
	By Theorem \ref{quad B}, we obtain that $ \mathrm{d}\langle\beta\rangle_{t}=\theta_{t}^{2}\mathrm{d}t $ q.s. and $ \theta_{t}\in[\underline{\sigma},\bar{\sigma}] $. Note that $ \beta $ is a martingale under each $ P\in\mathcal{P} $, where $ \mathcal{P} $ is given in Theorem \ref{thm2.1}. Then for each $ P\in\mathcal{P} $ and the corresponding probability space $ (\Omega,(\mathcal{F}_{t})_{t\geq0},P) $, $ W_{t}^{P}:=\int_{0}^{t}\theta_{s}^{-1}\mathrm{d}\beta_{s},\ t\geq0, $ is a classical $ \mathcal{F}_{t} $-Brownian motion. Thus we have 
	\begin{align}
		X_{t}^{n}=r+\int_{0}^{t}\theta_{s}\mathrm{d}W_{s}^{P}+m\int_{0}^{t}f_{n}(X_{s}^{n})\theta_{s}^{2}\mathrm{d}s, \quad P\text{-}a.s.
	\end{align}
	Consequently, we infer that under each $ P\in\mathcal{P} $, $ X^{n} $ is a semimartingale. It is clear that $ \underline{\sigma}^{2}\leq\frac{\mathrm{d}\langle X^{n}\rangle_{t}}{\mathrm{d}t}\leq\bar{\sigma}^{2} $, $ P $-a.s. By Proposition 4.11 of \cite{liu2020exit}, we obtain that $ I_{[0,\tau_{n}\wedge T]} \in \tilde{M}_{G}^{2}(0,T) $. Therefore, 
	\begin{align}\label{eq4.7}
		X_{t\wedge\tau_{n}}^{n}={}&r+\beta_{t\wedge\tau_{n}}+m\int_{0}^{t\wedge\tau_{n}}f_{n}(X_{s}^{n})\mathrm{d}\langle \beta\rangle_{s}\notag\\
		={}&r+\int_{0}^{t}I_{[0,\tau_{n}]}(s)\mathrm{d}\beta_{s}+m\int_{0}^{t}\frac{1}{X_{s}^{n}}I_{[0,\tau_{n}]}(s)\mathrm{d}\langle \beta\rangle_{s},  \quad t\in[0,T].
	\end{align}	

	Another step is to show that $ \tau_{n}\rightarrow\infty $ as $ n\rightarrow\infty $. For each $ x>0 $, define $ h(x)=x^{1-2m} $. 
	Applying the It\^{o} formula to $ h(X_{t\wedge\tau_{n}}^{n}) $, we get
	\[h(X_{t\wedge\tau_{n}}^{n})=r^{1-2m}+\int_{0}^{t}(1-2m)(X_{s}^{n})^{-2m}I_{[0,\tau_{n}]}(s)\mathrm{d}\beta_{s},\quad t\in[0,T].\] 
	This implies that $ h(X_{t\wedge\tau_{n}}^{n}),\ t\in[0,T], $ is a symmetric martingale. Then for every $ t\in[0,T] $, 
	\begin{align*}
		\hat{\mathbb{E}}[h(X_{t\wedge\tau_{n}}^{n})]
		={}&\hat{\mathbb{E}}[h(X_{t}^{n})I_{\{t<\tau_{n}\}}+h(X_{\tau_{n}}^{n})I_{\{\tau_{n}\leq t\}}]\\
		\geq{}&\hat{\mathbb{E}}[h(X_{\tau_{n}}^{n})I_{\{\tau_{n}\leq t\}}]\\
		={}&n^{2m-1}\mathrm{c}(\{\tau_{n}\leq t\}).
	\end{align*} 
    Thus we have 
	\begin{align*}
		r^{1-2m}=\hat{\mathbb{E}}[h(X_{0}^{n})]=\hat{\mathbb{E}}[h(X_{t\wedge\tau_{n}}^{n})]\geq n^{2m-1}\mathrm{c}(\{\tau_{n}\leq t\}).
	\end{align*}
	Therefore, we obtain that
	\begin{align}\label{tau}
		\mathrm{c}(\{\tau_{n}\leq t\})\leq (nr)^{1-2m}\rightarrow0 \quad \text{as } n\rightarrow\infty .
	\end{align}
	Due to (\ref{tau}) and $ \tau_{n}\uparrow $, it follows that $ \tau_{n}\rightarrow\infty $ q.s. as $ n\rightarrow\infty $. 
	
	Now we prove that (\ref{XSDE}) exists a solution which belongs to $ \tilde{M}_{G}^{2}(0,T) $. Let us define $ X=\lim\limits_{n\rightarrow\infty}X^{n} $ q.s. Notice that $ \lim\limits_{n\rightarrow\infty}\tau_{n}=\infty $ q.s. and $ X_{t}=X_{t}^{n}>0 $ for every $ t\in[0,\tau_{n}] $ q.s. Then we obtain $ X>0 $ q.s. By letting $ n \rightarrow\infty $ in equation (\ref{xsden}), we obtain that $ X $ satisfies equation (\ref{XSDE}) in the quasi-surely sense. In the following, we only need to prove $ X\in\tilde{M}_{G}^{2}(0,T) $. Set 
	\[\tilde{X}_{t}^{n}:=X_{t}^{n}I_{[0,\tau_{n}\wedge T]}(t).\]
	Clearly, $ \tilde{X}^{n}\in \tilde{M}_{G}^{2}(0,T) $ and $ X_{t}=\tilde{X}^{n}_{t} $ for every $ t\in[0,\tau_{n}] $ q.s. Thus, we have
	\begin{align}
		\hat{\mathbb{E}}\left[ \int_{0}^{T}\left| X_{t}-\tilde{X}_{t}^{n}\right| ^{2}\mathrm{d}t\right]={}&\hat{\mathbb{E}}\left[\int_{0}^{T}\left| (X_{t}-\tilde{X}_{t}^{n})I_{\{T<\tau_{n}\}}+(X_{t}-\tilde{X}_{t}^{n})I_{\{\tau_{n}\leq T\}}\right| ^{2}\mathrm{d}t \right]    \notag\\
		={}&\hat{\mathbb{E}}\left[\int_{0}^{T}\left| (X_{t}I_{[0,\tau_{n})}(t)+X_{t}I_{[\tau_{n},T]}(t)-\tilde{X}_{t}^{n})I_{\{\tau_{n}\leq T\}}\right| ^{2} \mathrm{d}t \right] \notag\\
		\leq{}&\int_{0}^{T}\hat{\mathbb{E}}\left[(X_{t})^{2}I_{\{\tau_{n}\leq T\}} \right]\mathrm{d}t.\notag    	
	\end{align}
	By applying the It\^{o} formula to $ (X_{t}^{n})^{2} $ and $ (X_{t}^{n})^{4} $, we obtain that
	\begin{align*}
		(X_{t}^{n})^{2}={}&r^{2}+\int_{0}^{t}2X_{s}^{n}\mathrm{d}\beta_{s}+\int_{0}^{t}2mX_{s}^{n}f_{n}(X_{s}^{n})\mathrm{d}\langle \beta\rangle_{s}+\langle \beta\rangle_{t}\\
		\leq{}&r^{2}+\int_{0}^{t}2X_{s}^{n}\mathrm{d}\beta_{s}+(2m+1)\langle \beta\rangle_{t},\notag\\
		(X_{t}^{n})^{4}={}&r^{4}+\int_{0}^{t}4(X_{s}^{n})^{3}\mathrm{d}\beta_{s}+\int_{0}^{t}4m(X_{s}^{n})^{3}f_{n}(X_{s}^{n})\mathrm{d}\langle \beta\rangle_{s}+\int_{0}^{t}6(X_{s}^{n})^{2}\mathrm{d}\langle \beta\rangle_{s}\\
		\leq{}&r^{4}+\int_{0}^{t}4(X_{s}^{n})^{3}\mathrm{d}\beta_{s}+\int_{0}^{t}(4m+6)(X_{s}^{n})^{2}\mathrm{d}\langle \beta\rangle_{s}.
	\end{align*}
	Therefore, 
	\begin{align*}
		\hat{\mathbb{E}}[(X_{t}^{n})^{2}]\leq{}& r^{2}+(2m+1)\bar{\sigma}^{2}T,\\
		\hat{\mathbb{E}}[(X_{t}^{n})^{4}]\leq{}&r^{4}+(4m+6)\bar{\sigma}^{2}\int_{0}^{t}\hat{\mathbb{E}}[(X_{s}^{n})^{2}]\mathrm{d}s\\
		\leq{}&C_{r,m,\bar{\sigma},T}.
	\end{align*}
    where $ C_{r,m,\bar{\sigma},T}=r^{4}+(4m+6)\bar{\sigma}^{2}r^{2}+(2m+1)(4m+6)\bar{\sigma}^{4}T^{2} $.
	Note that $ (X_{t}^{n})^{4}\geq0 $. Then by Fatou's Lemma, we obtain that for each $P\in\mathcal{P}  $, 
	\[ E_{P}[(X_{t})^{4}]=E_{P}[\liminf_{n\rightarrow\infty}(X_{t}^{n})^{4}]\leq \liminf_{n\rightarrow\infty}E_{P}[(X_{t}^{n})^{4}]\leq \liminf_{n\rightarrow\infty}\hat{\mathbb{E}}[(X_{t}^{n})^{4}] ,\]
	which implies
	\begin{align}\label{4.9}
		\hat{\mathbb{E}}[(X_{t})^{4}]\leq\liminf_{n\rightarrow\infty}\hat{\mathbb{E}}[(X_{t}^{n})^{4}]\leq C_{r,m,\bar{\sigma},T} .
	\end{align}
	By (\ref{4.9}) and H\"{o}lder's inequality, we conclude that 
	\begin{align*}
		\hat{\mathbb{E}}\left[ \int_{0}^{T}|X_{t}-\tilde{X}_{t}^{n}|^{2}\mathrm{d}t\right]\leq{}&\int_{0}^{T}\hat{\mathbb{E}}\left[(X_{t})^{2}I_{\{\tau_{n}\leq T\}} \right] \mathrm{d}t\\
		\leq{}&\int_{0}^{T}\left(\hat{\mathbb{E}}[(X_{t})^{4}] \right)^{\frac{1}{2}} \left( \hat{\mathbb{E}}[I_{\{\tau_{n}\leq T\}}]\right)^{\frac{1}{2}}\mathrm{d}t\\
		\leq{}&\int_{0}^{T}(C_{r,m,\bar{\sigma},T})^{\frac{1}{2}}(\mathrm{c}(\{\tau_{n}\leq T\}))^{\frac{1}{2}}\mathrm{d}t\\
		\leq{}&(C_{r,m,\bar{\sigma},T})^{\frac{1}{2}}T(nr)^{\frac{1}{2}-m}\rightarrow 0 \quad \text{as } n\rightarrow\infty.
	\end{align*}
	Then we obtain $ X\in \tilde{M}_{G}^{2}(0,T) $.
	
	Finally, we need to show the uniqueness of the solution. Let $ X'\in M_{G}^{2}(0,T) $ be a solution of equation (\ref{XSDE}). Similarly, we can define $ \tau'_{n}:=\inf\{t\geq0:X'_{t}\leq\frac{1}{n}\} $. Then we obtain
	\begin{align*}
		X_{t\wedge\tau_{n}\wedge\tau_{n}^{'}}=r+{}&\beta_{t\wedge\tau_{n}\wedge\tau_{n}^{'}}+m\int_{0}^{t\wedge\tau_{n}\wedge\tau_{n}^{'}}\frac{1}{X_{s}}\mathrm{d}\langle\beta\rangle_{s}, \\
		X_{t\wedge\tau_{n}\wedge\tau_{n}^{'}}^{'}=r+{}&\beta_{t\wedge\tau_{n}\wedge\tau_{n}^{'}}+m\int_{0}^{t\wedge\tau_{n}\wedge\tau_{n}^{'}}\frac{1}{X_{s}^{'}}\mathrm{d}\langle\beta\rangle_{s}.
	\end{align*}
    For $ t\in[0,\tau_{n}\wedge\tau_{n}^{'}] $, 
    \begin{align*}
    	|X_{t}-X_{t}^{'}|\leq{}&n^{2}m\bar{\sigma}^{2}\int_{0}^{t}|X_{s}-X_{s}^{'}|\mathrm{d}s.
    \end{align*}
    By Gronwall’s inequality, we have $ X_{t}=X'_{t} $ for every $ t\in[0,\tau_{n}\wedge\tau'_{n}] $ q.s. It follows that $ \tau_{n}=\tau'_{n} $. Note that $ \tau_{n}\rightarrow\infty $ q.s. as $ n\rightarrow\infty $. Then we obtain that $ X=X' $ q.s. The proof is complete.
\end{proof}

\begin{proof}[Proof of Theorem \ref{thm4.1}] 
	Since the Euclidean norm of $ x\in\mathbb{R}^{d} $ is not $ C^{2}(\mathbb{R}^{d}) $ at the origin, we cannot use the It\^{o} formula to $ R_{t}=|B_{t}^{x}| $ directly. Based on the above argument, we consider 
	\[Y_{t}:=R_{t}^{2}=|B_{t}^{x}|^{2}=(\bar{B}_{t}^{(1)})^{2}+\cdots+(\bar{B}_{t}^{(d)})^{2} .\]
	We use the It\^{o} formula to $ Y_{t} $. According to Lemma \ref{lem4.1}, we get
	\[ Y_{t}=r^{2}+2\sum_{i=1}^{d}\int_{0}^{t}\bar{B}_{s}^{(i)}\mathrm{d}B_{s}^{(i)}+d\langle B^{(1)}\rangle_{t}. \]
	For every $ n\in\mathbb{N} $ and $ \epsilon_{n}=2^{-n} $, we set
	\begin{equation*}
		\varphi_{\epsilon_{n}}(y)=
		\begin{cases}
			\frac{3}{8}\sqrt{\epsilon_{n}}+\frac{3}{4\sqrt{\epsilon_{n}}}y-\frac{1}{8\epsilon_{n}\sqrt{\epsilon_{n}}}y^{2}; \quad &y<\epsilon_{n},\\
			\sqrt{y}; \quad &y\geq\epsilon_{n}. 
		\end{cases}
	\end{equation*}
	Then $ \varphi_{\epsilon_{n}}\in C^{2}(\mathbb{R}) $ and $ \lim_{n\rightarrow\infty}\varphi_{\epsilon_{n}}(y)=\sqrt{y} $ for $ y\geq0 $. Applying the Itô formula to $ \varphi_{\epsilon_{n}}(Y_{t}) $, we obtain that
	\begin{align}\label{ep}
		\varphi_{\epsilon_{n}}(Y_{t})=\varphi_{\epsilon_{n}}(r^{2})+\sum_{i=1}^{d}I_{t}^{(i)}(\epsilon_{n})+J_{t}(\epsilon_{n})+K_{t}(\epsilon_{n}),
	\end{align}
	where 
	\begin{align*}
		&I_{t}^{(i)}(\epsilon_{n}):=\int_{0}^{t}\left[I_{\{Y_{s}<\epsilon_{n}\}}\frac{1}{2\sqrt{\epsilon_{n}}}(3-\frac{Y_{s}}{\epsilon_{n}})+I_{\{Y_{s}\geq\epsilon_{n}\}}\frac{1}{R_{s}} \right]\bar{B}_{s}^{(i)}\mathrm{d}B_{s}^{(i)},\\ 
		&J_{t}(\epsilon_{n}):=\int_{0}^{t}I_{\{Y_{s}\geq\epsilon_{n}\}}\frac{d-1}{2R_{s}}\mathrm{d}\langle B^{(1)}\rangle_{s},\\
		&K_{t}(\epsilon_{n}):=\int_{0}^{t}I_{\{Y_{s}<\epsilon_{n}\}}\frac{1}{4\sqrt{\epsilon_{n}}}\left(3d-(d+2)\frac{Y_{s}}{\epsilon_{n}}\right)\mathrm{d}\langle B^{(1)}\rangle_{s}.
	\end{align*}

	Now we show that (\ref{ep}) tends to (\ref{R}) as $ n\rightarrow\infty $.
	First we prove that $ K_{t}(\epsilon_{n})\rightarrow0 $ as $ n\rightarrow \infty$. 
	\begin{align*}
		\hat{\mathbb{E}}\left[|K_{t}(\epsilon_{n})| \right] \leq{}&\hat{\mathbb{E}}\left[\int_{0}^{t}\frac{3d}{4\sqrt{\epsilon_{n}}}I_{\{Y_{s}<\epsilon_{n}\}}\mathrm{d}\langle B^{(1)}\rangle_{s}	 \right] \\
		\leq{}& \frac{3\bar{\sigma}^{2}d}{4}\hat{\mathbb{E}}\left[\int_{0}^{t}\frac{1}{\sqrt{\epsilon_{n}}}I_{\{Y_{s}<\epsilon_{n}\}}\mathrm{d}s \right].  
	\end{align*}
	From Lemma \ref{im}, we conclude that
	\begin{align*}
		\hat{\mathbb{E}}\left[|K_{t}(\epsilon_{n})| \right]\leq\frac{3\bar{\sigma}^{2}d}{4}\hat{\mathbb{E}}\left[\int_{0}^{t}\frac{1}{\sqrt{\epsilon_{n}}}I_{\{|B_{s}^{x}|<\sqrt{\epsilon_{n}}\}}\mathrm{d}s \right] \rightarrow 0  \quad \text{as } n\rightarrow \infty.
	\end{align*}
	Then we show that $ \sum_{i=1}^{d}I_{t}^{(i)}(\epsilon_{n})\rightarrow\beta_{t} $ as $ n\rightarrow\infty $. Let us define $ \beta_{t}^{(i)}=\int_{0}^{t}\frac{\bar{B}_{s}^{(i)}}{R_{s}}\mathrm{d}B_{s}^{(i)}$ for $ 1\leq i\leq d $. Using Lemma \ref{im} again, we get  
	\begin{align*}
		\hat{\mathbb{E}}\left[\left| I_{t}^{(i)}(\epsilon_{n})-\beta_{t}^{(i)}\right| ^{2} \right]={}&\hat{\mathbb{E}}\left[\left| \int_{0}^{t}I_{\{Y_{s}<\epsilon_{n}\}}\left(\frac{1}{2\sqrt{\epsilon_{n}}}(3-\frac{Y_{s}}{\epsilon_{n}})-\frac{1}{R_{s}}\right)\bar{B}_{s}^{(i)}\mathrm{d}B_{s}^{(i)}\right| ^{2} \right]  \\
		={}&\hat{\mathbb{E}}\left[\int_{0}^{t}\left|I_{\{Y_{s}<\epsilon_{n}\}}\left(1-\frac{1}{2}\sqrt{\frac{Y_{s}}{\epsilon_{n}}}(3-\frac{Y_{s}}{\epsilon_{n}})\right)^{2}(\frac{\bar{B}_{s}^{(i)}}{R_{s}})^{2} \right|^{2}\mathrm{d}\langle B^{(1)}\rangle_{s}  \right]\\
		\leq{}&\bar{\sigma}^{2}\hat{\mathbb{E}}\left[\int_{0}^{t}I_{\{Y_{s}<\epsilon_{n}\}}\mathrm{d}s \right]\rightarrow 0 \quad \text{as } n\rightarrow\infty.  
	\end{align*} 
	Therefore, $\sum_{i=1}^{d}I_{t}^{(i)}(\epsilon_{n})\rightarrow \beta_{t}  $ as $ n\rightarrow\infty $. Finally, it remains to show that $J_{t}(\epsilon_{n}) \rightarrow \int_{0}^{t}\frac{d-1}{2R_{s}}\mathrm{d}\langle \beta\rangle_{s} $ as $ n\rightarrow\infty $. Clearly $g_{n}:=I_{\{Y_{t}\geq\epsilon_{n}\}}\frac{d-1}{2R_{t}},\ t\in[0,T],$ belongs to $ M_{G}^{1}(0,T)  $. If we can prove $ g_{n} $ is a Cauchy sequence under the norm $ \|\cdot\|_{M_{G}^{1}} $, then the limit $ \frac{d-1}{2R_{t}},\ t\in[0,T], $ belongs to $ M_{G}^{1}(0,T) $ which implies $ J_{t}(\epsilon_{n}) \rightarrow \int_{0}^{t}\frac{d-1}{2R_{s}}\mathrm{d}\langle \beta\rangle_{s} $ as $ n\rightarrow\infty $.
	For any $ n,m\in\mathbb{N} $, we have 
	\begin{align*}
		\left\|g_{n+m}-g_{n} \right\|_{M_{G}^{1}}\leq{}&\left\|g_{n+m}-g_{n+m-1} \right\|_{M_{G}^{1}}+\cdots+\left\|g_{n+1}-g_{n} \right\|_{M_{G}^{1}}\\
		={}&\sum_{j=1}^{m}\left\|I_{\{\epsilon_{n+j-1}\geq R_{t}\geq\epsilon_{n+j}\}}\frac{d-1}{2R_{t}} \right\|_{M_{G}^{1}}\\
		={}&\sum_{j=1}^{m}\hat{\mathbb{E}}\left[\int_{0}^{T}\left|I_{\{\epsilon_{n+j-1}\geq R_{t}\geq\epsilon_{n+j}\}}\frac{d-1}{2R_{t}} \right| \mathrm{d}t \right]\\
		\leq{}&\sum_{j=1}^{m}\frac{d-1}{2}\hat{\mathbb{E}}\left[\int_{0}^{T}I_{\{|B_{t}^{x}|\leq\epsilon_{n+j-1}\}}2^{n+j}\mathrm{d}t \right].   
	\end{align*}
	By Lemma \ref{im}, there exists $ \alpha\in(\frac{1}{2},1) $ such that
	\begin{align*}
		\hat{\mathbb{E}}\left[\int_{0}^{T}I_{\{|B_{t}^{x}|\leq\epsilon_{n+j-1}\}}2^{n+j}\mathrm{d}t \right]\leq{}&\int_{0}^{T}\hat{\mathbb{E}}[I_{\{|B_{t}^{x}|\leq\epsilon_{n+j-1}\}}]2^{n+j}\mathrm{d}t\\
		\leq{}&\int_{0}^{T}\exp(\frac{1}{2\bar{\sigma}^{2}})\frac{\epsilon_{n+j-1}^{2\alpha}}{t^{\alpha}}2^{n+j}\mathrm{d}t\\
		={}&\frac{2}{1-\alpha}\exp(\frac{1}{2\bar{\sigma}^{2}})T^{1-\alpha}(2^{n+j-1})^{1-2\alpha}.
	\end{align*}
	Since the series $ \sum_{j=1}^{\infty}2^{(1-2\alpha)j} $ is convergent, we get 
	\begin{align*}
		\left\|g_{n+m}-g_{n} \right\|_{M_{G}^{1}}\leq{}&\sum_{j=1}^{m}\frac{d-1}{1-\alpha}\exp(\frac{1}{2\bar{\sigma}^{2}})T^{1-\alpha}(2^{n+j-1})^{1-2\alpha}\rightarrow 0 \quad \text{as } n,m\rightarrow\infty.
	\end{align*}
	Now we obtain $ R $ satisfies (\ref{R}). 
	
	If $ d\geq3, \ r>0 $ in (\ref{R}), then $ \frac{d-1}{2}\geq1>\frac{1}{2} $. According to the Lemma \ref{lem4.6}, we obtain that $ R $ is the unique solution which belongs to $ M_{G}^{2}(0,T) $. Moreover, $ R\in\tilde{M}_{G}^{2}(0,T) $ and $ R>0 $ q.s.
\end{proof}

\begin{remark}
	If $ \underline{\sigma}=\bar{\sigma}=1 $, then $ R $ is reduced to the Bessel process. For $ d\geq2 $ and $ r>0 $, the Bessel process is the unique solution of the following equation 
	\begin{align}\label{CR}
		\begin{cases}
			\mathrm{d}R_{t}=\frac{d-1}{2R_{t}}\mathrm{d}t+	\mathrm{d}\beta_{t},\\
			R_{0}=r,
		\end{cases}
	\end{align} 
	and never reaches the origin. According to Theorem \ref{thm4.1}, the above properties hold with $ d\geq3 $. The proof of the uniqueness of the solution depends on the Yamada–Watanabe uniqueness criterion, but this does not have a more general result under the framework of $ G $-expectation. Briefly speaking, the proof of the Bessel process which never reaches the origin depends critically on some manipulation of stopping times. However, since there are no corresponding results regarding the stopping time $ \tau_{n} $ under the framework of $ G $-expectation, the above approach is not currently applicable. Therefore, we leave the discussion for future research topics.
\end{remark}

\section*{Acknowledgements}

The authors thank the anonymous referees for their very careful reading and many valuable suggestions.

\section*{Declarations}

\subsection*{Funding}
This work is supported by the National Natural Science Foundation of China (Grant No. 12326603, 11671231) and the National Key R\&D Program of China (Grant No. 2018YFA0703900).
\subsection*{Ethical approval}
Not applicable.
\subsection*{Informed consent}
Not applicable.
\subsection*{Author Contributions}
All authors contributed equally to each part of this work. All authors read and approved the final manuscript.
\subsection*{Data Availability Statement}
Not applicable.
\subsection*{Conflict of Interest}
The authors declare that they have no known competing financial interests or personal relationships that could have appeared to influence the work reported in this paper.
\subsection*{Clinical Trial Number}
Not applicable.

\bigskip

\end{document}